\newtheorem{thm}{Theorem}[section]
\newtheorem{prop}[thm]{Proposition}
\newtheorem{lemma}[thm]{Lemma}
\def\om{\omega}
\def\pp{\partial}
\newcommand{\p}{\partial}
\newcommand{\beq}{\begin{equation}}
\newcommand{\eeq}{\end{equation}}
\newcommand{\ben}{\begin{eqnarray}}
\newcommand{\een}{\end{eqnarray}}
\newcommand{\beno}{\begin{eqnarray*}}
\newcommand{\eeno}{\end{eqnarray*}}
\numberwithin{equation}{section}
\subjclass[2000]{35Q35, 76D03}
\keywords{2D MHD equations, fractional partial dissipation, global regularity}
\begin{document}
\title[The 2D MHD equations]
{Global regularity for the 2D MHD equations with partial hyperresistivity}

\author[Dong, Li and Wu]{Bo-Qing Dong$^{1}$, Jingna Li$^{2}$ and  Jiahong Wu$^{3}$}

\address{$^1$ College of Mathematics and Statistics, Shenzhen University, Shenzhen 518060, China}
\email{bqdong@szu.edu.cn}

\address{$^3$ Department of Mathematics,
Jinan University, Guangzhou 510632, China
}

\email{jingna8005@hotmail.com}

\address{$^4$ Department of Mathematics, Oklahoma State University,
Stillwater, OK 74078}

\email{jiahong.wu@okstate.edu}

\vskip .2in
\begin{abstract}
This paper establishes the global existence and regularity for a system of
the two-dimensional (2D) magnetohydrodynamic (MHD) equations with only
directional hyperresistivity. More precisely, the equation of $b_1$ (the horizontal component of the magnetic field) involves only vertical hyperdiffusion
(given by $\Lambda_2^{2\beta} b_1$) while the equation of $b_2$ (the vertical component) has only horizontal hyperdiffusion (given by $\Lambda_1^{2\beta} b_2$), where $\Lambda_1$ and $\Lambda_2$ are directional Fourier multiplier operators
with the symbols being $|\xi_1|$ and $|\xi_2|$, respectively. We prove that, for $\beta>1$, this system always possesses a unique global-in-time classical solution
when the initial data is sufficiently smooth. The model concerned here is rooted
in the MHD equations with only magnetic diffusion, which play a significant role in the study of magnetic reconnection and magnetic turbulence.
In certain physical regimes and under suitable scaling,
the magnetic diffusion becomes partial (given by part of the Laplacian
operator). There have been considerable recent developments on the fundamental issue of whether classical solutions of these equations
remain smooth for all time. The papers of Cao-Wu-Yuan \cite{CaoWuYuan} and of Jiu-Zhao \cite{JiuZhao2}
obtained the global regularity when the magnetic diffusion is given by
the full fractional Laplacian $(-\Delta)^\beta$ with $\beta>1$. The main
result presented in this paper requires only directional fractional diffusion
and yet we prove the regularization in all directions.  The proof makes use of a key observation on the structure of the nonlinearity in the MHD equations and technical tools on Fourier multiplier operators such as the H\"{o}rmander-Mikhlin multiplier theorem. The result presented here appears to be the sharpest for the 2D MHD equations with partial magnetic diffusion.
\end{abstract}

\maketitle

\vskip .3in
\section{Introduction}

The magnetohydrodynamic (MHD) equations are the center piece of the magnetohydrodynamics. Since their initial derivation by the Nobel Laureate H. Alfv\'{e}n in 1924, the MHD equations have played pivotal roles in the study of many phenomena in geophysics, astrophysics, cosmology and engineering (see, e.g., \cite{Bis,Davi}). The standard incompressible MHD equations can be written as
\begin{equation}\label{MHD0}
 \left\{
\begin{array}{l}
u_t + u\cdot \nabla u  = -\nabla p + \nu \Delta u + b\cdot\nabla b, \\
b_t + u\cdot \nabla b = \eta\Delta b + b\cdot\nabla u,\\
\nabla\cdot u =0, \quad \nabla\cdot b =0,
 \end{array} \right.
 \end{equation}
where $u$ denotes the velocity field, $b$ the magnetic field,
$p$ the pressure, $\nu\ge 0$ the kinematic viscosity
and $\eta\ge 0$ the magnetic
diffusivity. (\ref{MHD0}) reflects the interaction between the velocity
field and the magnetic field. They consist of a coupled
system of the Navier-Stokes equations of fluid dynamics
and Maxwell's equations of electromagnetism.

\vskip .1in
The MHD equations are also of great interest in mathematics. Fundamental
issues such as the global existence and regularity of solutions to the MHD
equations have recently attracted considerable interest. Mathematically
the MHD equations are not merely a combination of two parallel
Navier-Stokes type equations but an interactive and integrated system. They
contain richer structures than the Navier-Stokes equations and exploring these
special structures can lead to interesting results that are not parallel to those
for the Navier-Stokes equations.

\vskip .1in
Attention here is focused on the 2D MHD equations. When there is no
kinematic dissipation or magnetic diffusion, namely (\ref{MHD0})
with $\nu=\eta=0$, the MHD equations become inviscid and the global
regularity problem appears to be out of reach at this moment. In contrast,
when both the dissipation and the magnetic diffusion are present,
namely (\ref{MHD0}) with $\nu>0$ and $\eta>0$, the MHD
equations are fully dissipative and the global regularity problem
in the 2D case can be solved following the approach for the 2D Navier-Stokes equations.  It is natural to explore the
intermediate equations that bridge the two extreme cases. The MHD equations
with partial or fractional dissipation exactly fill this gap. There have been
significant recent developments on the MHD
equations with partial or fractional dissipation. Important progress has
been made (see, e.g, \cite{Sc07, CaoReWu, CaoReWuZ, CaoWu, CaoWuYuan, CaiLei, Chem, ChenWang, Con22,DZ, FNZ, Fefferman1, Fefferman2, HeXuYu, HuX, HuLin, HuWang, JiuNiu, JNW, JiuZhao, JiuZhao2, LeiZ, LZ,LinZhang1, Ren, TrYu, WeiZ, Wu2,Wu3, Wu4, WuWu,WuWuXu,WuZhang,
Yam1, Yam2, Yam3, Yam4, Yam5, YuanBai, ZZ, TZhang}).

\vskip .1in
One special partial dissipation case is the 2D resistive MHD equations, namely
\begin{equation}\label{iMHD}
 \left\{
\begin{array}{l}
u_t + u\cdot \nabla u  = -\nabla p + b\cdot\nabla b, \\
b_t + u\cdot \nabla b = \eta\, \Delta b  + b\cdot\nabla u,\\
\nabla\cdot u =0, \quad \nabla\cdot b =0,
 \end{array} \right.
 \end{equation}
where $\eta> 0$ denotes the magnetic diffusivity (resistivity). (\ref{iMHD}) is applicable when
the fluid viscosity can be ignored while the role of
resistivity is important such as in magnetic reconnection
and magnetic turbulence. Magnetic reconnection refers to the
breaking and reconnecting of oppositely directed magnetic field
lines in a plasma and is at the heart of many spectacular events in
our solar system such as solar flares and northern
lights. The mathematical study of (\ref{iMHD}) may help
understand the Sweet-Parker
model arising in magnetic reconnection theory \cite{Pri}.
Although the global regularity problem on (\ref{iMHD}) is not completely solved at
this moment, recent efforts on this problem have significantly advanced our
understanding.

\vskip .1in
In certain physical regimes and under suitable scaling, the full
Laplacian dissipation is reduced to a partial dissipation. One notable example
is the Prandtl boundary layer equation in which only the vertical
dissipation is included in the horizontal component (see, e.g., \cite{Stew}). This paper focuses on a system of the 2D MHD equations that is closely related to (\ref{iMHD}),
\begin{equation}\label{MHD}
\left\{\aligned
&\partial_{t}u+(u \cdot \nabla) u +\nabla p= b\cdot \nabla b, \\
&\partial_{t} b_1 +(u \cdot \nabla) b_1 + \eta\Lambda_{2}^{2\beta}b_1=u\cdot\nabla b_1, \\
&\partial_{t} b_2 +(u \cdot \nabla) b_2 + \eta \Lambda_{1}^{2\beta}b_2=u\cdot\nabla b_2, \\
&\nabla\cdot u=0, \quad \nabla\cdot b=0, \\
&u(x, 0)=u_{0}(x),  \quad b(x,0)=b_{0}(x),
\endaligned\right.
\end{equation}
where $b_1$ and $b_2$ denote the components of $b$, $\eta>0$ and $\beta>0$ are real parameters.
The fractional partial derivative operators $\Lambda_{1}^{\gamma}$ and $\Lambda_{2}^{\gamma}$ with $\gamma>0$ are defined through the Fourier transform, namely
$$
\widehat{\Lambda_{1}^{\gamma}
f}(\xi_1, \xi_2)=|\xi_{1}|^{\gamma}\widehat{f}(\xi_1, \xi_2), \qquad
\widehat{\Lambda_{2}^{\gamma}
f}(\xi_1, \xi_2)=|\xi_2|^{\gamma}\widehat{f}(\xi_1, \xi_2).
$$
In addition, we also use $\Lambda^\sigma$ with $\sigma>0$ to denotes the 2D fractional Laplace operator,
$$
\widehat{\Lambda^{\sigma}
f}(\xi)=|\xi|^{\sigma}\widehat{f}(\xi), \qquad \xi =(
\xi_1, \xi_2).
$$
In comparison with (\ref{iMHD}), (\ref{MHD}) only has vertical fractional Laplacian
diffusion, no horizontal diffusion in the $b_1$ equation and no vertical
diffusion in the $b_2$ equation.

\vskip .1in
Our goal here is to show that, when $\beta>1$,  any sufficiently smooth initial
data $(u_0, b_0)$ leads to a unique global solution of (\ref{MHD}). More precisely, we establish the following theorem.

\begin{thm} \label{main}
Consider the 2D MHD equations in (\ref{MHD}) with $\eta>0$ and $\beta>1$. Assume
$(u_0, b_0) \in H^s(\mathbb{R}^2)$ with $s>2$, and $\nabla\cdot u_0 =0$ and $\nabla\cdot b_0 =0$. Then (\ref{MHD}) possesses a unique global solution $(u, b)$ satisfying,
for any $T>0$,
$$
(u,b) \in L^\infty(0, T; H^s(\mathbb{R}^2)), \quad b \in L^2(0, T; \dot{H}^{s+\beta}(\mathbb{R}^2)).
$$
\end{thm}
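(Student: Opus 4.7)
The plan is to close a global-in-time a priori estimate for $\|(u,b)(t)\|_{H^s}$ and combine it with standard local existence and a continuation argument (which are classical when $s>2$). The starting point is the $L^2$ energy identity obtained by pairing each equation with its natural variable, using $\nabla\cdot u=\nabla\cdot b=0$, and exploiting the usual MHD cancellation between the Lorentz term $b\cdot\nabla b$ and the stretching $b\cdot\nabla u$; this yields
$$\|u(t)\|_{L^2}^2+\|b(t)\|_{L^2}^2+2\eta\int_0^t\bigl(\|\Lambda_2^\beta b_1\|_{L^2}^2+\|\Lambda_1^\beta b_2\|_{L^2}^2\bigr)\,ds=\|u_0\|_{L^2}^2+\|b_0\|_{L^2}^2.$$

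The pivotal step is to upgrade this purely directional dissipation on $(b_1,b_2)$ to a genuinely isotropic bound $b\in L^2(0,T;\dot H^{\beta})$. I would exploit the divergence-free relation $\xi_1\widehat b_1+\xi_2\widehat b_2=0$, which allows the exchange of $|\xi_1||\widehat b_1|$ for $|\xi_2||\widehat b_2|$ and vice versa, and combine it with a Littlewood--Paley decomposition, further split into the regions $\{|\xi_1|\lesssim|\xi_2|\}$ and $\{|\xi_2|\lesssim|\xi_1|\}$. On each piece, the ratio needed to trade a directional multiplier $|\xi_1|^\beta$ or $|\xi_2|^\beta$ for the isotropic $|\xi|^\beta$ is a H\"ormander--Mikhlin symbol with bounds uniform in the dyadic scale, and summing the pieces returns isotropic dissipation on $b$. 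This is the Fourier-analytic incarnation of the key structural observation mentioned in the abstract.

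With the isotropic bound in hand I would then apply $\Lambda^s$ to each equation, pair with the corresponding variable, and estimate the nonlinear terms using Kato--Ponce commutators together with the cancellation already used above. The outcome is a differential inequality of the schematic form
$$\frac{d}{dt}\|(u,b)\|_{H^s}^2+\eta\bigl(\|\Lambda_2^\beta b_1\|_{H^s}^2+\|\Lambda_1^\beta b_2\|_{H^s}^2\bigr)\le C\bigl(1+\|\nabla u\|_{L^\infty}+\|\nabla b\|_{L^\infty}\bigr)\|(u,b)\|_{H^s}^2,$$
in which $\|\nabla b\|_{L^\infty}$ is integrable in time through the isotropic bound upgraded to $H^s$ level (using $\beta>1$ and Sobolev embedding), while $\|\nabla u\|_{L^\infty}$ is handled via the 2D vorticity equation $\partial_t\omega+u\cdot\nabla\omega=b\cdot\nabla j$ together with a logarithmic-type Sobolev estimate, the current $j=\partial_1 b_2-\partial_2 b_1$ being absorbed into the $\dot H^{s+\beta}$ control of $b$. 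Gr\"onwall's inequality then closes the bound.

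The main obstacle is the isotropization step: the multipliers $|\xi_1|^\beta/|\xi_2|^\beta$ required to trade one directional dissipation for another are singular on the coordinate axes $\xi_1=0$ and $\xi_2=0$, so it is only on dyadic blocks away from those axes that they become bounded H\"ormander--Mikhlin multipliers. Reassembling the pieces while keeping constants uniform across scales, and using the second directional dissipation to cover the frequency regions where the first has broken down, is the delicate part of the argument; it is exactly the threshold $\beta>1$ that supplies enough transverse regularity on $b$ to absorb the nonlinear contributions once the isotropization is available, in parallel with the role played by the same threshold in the full-fractional results of Cao--Wu--Yuan and Jiu--Zhao.
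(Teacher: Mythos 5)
Your proposal has a genuine gap at its load-bearing step. The isotropization of the dissipation is actually the easy part, and easier than you make it: since $\nabla\cdot b=0$ gives $|\xi_1||\widehat b_1|=|\xi_2||\widehat b_2|$ pointwise, on $\{|\xi_1|\le|\xi_2|\}$ one has $|\xi|^{2\beta}(|\widehat b_1|^2+|\widehat b_2|^2)\le C|\xi_2|^{2\beta}|\widehat b_1|^2$ directly, and symmetrically on the other region, so $\|\Lambda^\beta b\|_{L^2}^2\le C\bigl(\|\Lambda_2^\beta b_1\|_{L^2}^2+\|\Lambda_1^\beta b_2\|_{L^2}^2\bigr)$ by Plancherel alone --- no Littlewood--Paley decomposition, no H\"ormander--Mikhlin, and no issue with singular multipliers on the axes (the same weight works at any Sobolev level). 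The real obstruction is elsewhere: the velocity equation has no dissipation whatsoever, so your $H^s$ differential inequality carries the coefficient $\|\nabla u\|_{L^\infty}$, and your proposed control of it is circular. The BKM-type bound needs $\|\omega\|_{L^\infty}$, the vorticity equation gives $\|\omega(t)\|_{L^\infty}\le\|\omega_0\|_{L^\infty}+\int_0^t\|b\|_{L^\infty}\|\nabla j\|_{L^\infty}\,d\tau$, and bounding $\|\nabla j\|_{L^\infty}$ by interpolation against $\dot H^{s+\beta}$ uses precisely the time-integrated dissipation you are trying to estimate. Feeding that back in yields a Gr\"onwall inequality of the schematic form $Z'\lesssim Z^{3/2}\log Z$, which closes only locally in time. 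The $L^2$-level dissipation $b\in L^2_t\dot H^\beta$ is far too weak to substitute (it does not control $\nabla j$ in any $L^p_tL^\infty_x$ for $\beta$ close to $1$), so the circle is not broken.

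The paper breaks this circle by a route your proposal does not contain: the Duhamel (mild) representation of $b_1$ and $b_2$ with the \emph{one-dimensional} directional kernels $g,h$ of $e^{-t|\xi_2|^{2\beta}}$, $e^{-t|\xi_1|^{2\beta}}$, combined with the structural identity $b\cdot\nabla u_1-u\cdot\nabla b_1=\partial_2(b_2u_1-u_2b_1)$ (and its mirror for $b_2$). Because the nonlinearity is a pure $\partial_2$-derivative, that derivative can be shifted onto the kernel $g$, which smooths in $x_2$; this yields $\|\nabla b\|_{L^\infty_tL^q}$, $\|\Delta b\|_{L^1_tL^q}$ and $\|\omega\|_{L^\infty_tL^q}$ purely from the $H^1$-level quantities, and then $\|\Lambda^\sigma\Delta b\|_{L^1_tL^q}$ for $0<\sigma<2\beta-2$, where H\"ormander--Mikhlin is genuinely needed to transfer regularity between directions in $L^q$ with $q\ne 2$ (Plancherel being unavailable there). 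Only after $\|\nabla j\|_{L^1_tL^\infty}$ and $\|\omega\|_{L^\infty_{x,t}}$ are secured by this bootstrap does the $H^s$ estimate close. To repair your argument you would need either this mild-formulation step or some other a priori, non-circular control of $\|\omega\|_{L^\infty}$ (or at least $\|\omega\|_{L^\infty_tL^q}$); the energy method plus isotropization alone does not supply it.
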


\vskip .1in
The proof of this result takes advantage of the special structure in the nonlinear terms of the magnetic field equation. Even though the system contains only directional fractional magnetic diffusion, we are still able to establish the
regularization in all directions. Theorem \ref{main} improves previous work of
Cao-Wu-Yuan \cite{CaoWuYuan} and of Jiu-Zhao \cite{JiuZhao2}. \cite{CaoWuYuan} and
\cite{JiuZhao2} obtained via different approaches the global regularity of a more regularized system
\begin{equation}\label{MHD2}
\left\{\aligned
&\partial_{t}u+(u \cdot \nabla) u +\nabla p= b\cdot \nabla b, \\
&\partial_{t} b +(u \cdot \nabla) b  + \eta\Lambda^{2\beta}b =u\cdot\nabla b, \\
&\nabla\cdot u=0, \quad \nabla\cdot b=0.
\endaligned\right.
\end{equation}
for the case when $\beta>1$. (\ref{MHD2}) involves full fractional
magnetic diffusion while (\ref{MHD}) involves only directional
fractional diffusion. The improvement is not a trivial one.

\vskip .1in
The proof of Theorem \ref{main} is not a simple generalization
of those in the previous papers \cite{CaoWuYuan} and
\cite{JiuZhao2}. Since (\ref{MHD}) contains only partial fractional diffusion,
some of the classical tools such as the maximal
regularity type estimates for the 2D heat equation can no longer be used here.
The equation of $b_1$ in (\ref{MHD}) involves only the vertical fractional
diffusion and in general we would not be able to obtain the smoothing of $b_1$
in the horizontal direction. However, the special
nonlinear structure actually allows us to prove that the derivatives of $b_1$ with respect to $x_1$ are globally bounded. The key observation is the
following identity, thanks to $\nabla\cdot u=0$ and $\nabla\cdot b=0$,
\begin{equation}\label{vv0}
b\cdot \nabla u_1 - u\cdot\nabla b_1 = \pp_1(b_1 u_1) + \pp_2(b_2 u_1) - \pp_1(b_1 u_1) - \pp_2 (u_2 b_1) =\pp_2(b_2 u_1 -u_2 b_1).
\end{equation}
To make use of this special structure, we write $b_1$ in the integral form,
\begin{equation}\label{vv1}
b_1(t) = g(t) \ast_2 b_{01} + \int_0^t g(t-\tau)\ast_2 (b\cdot\nabla u_1 - u\cdot\nabla b_1)\,d\tau,
\end{equation}
where $g$ denotes the 1D kernel functions associated with
the Fourier multiplier $e^{-t\,|\xi_2|^{2\beta}}$, namely
$$
g(x_2, t) = \int_{\mathbb{R}} e^{-t\,|\xi_2|^{2\beta}}\, e^{ix_2 \xi_2}\,d\xi_2.
$$
The notation for the convolution here is given by
\beno
g(t) \ast_2 b_{01} = \int_{\mathbb{R}} g(x_2- y_2, t) \,b_{01}(x_1, y_2)\,dy_2.
\eeno
(\ref{vv0}) and (\ref{vv1}) together allow us to obtain the control on $\pp_1 b_1$
and $\pp_1\pp_2 b_1$. Similarly, we can control $\pp_2 b_2$
and $\pp_1\pp_2 b_2$ even when the equation of $b_2$ involves no
vertical dissipation. This explains how we take advantage of the special
structure in the nonlinearity to control all the second-order derivatives
of $b$. To control even higher-order derivatives of $b$, say
$\Lambda^\sigma \Delta b$ with a fractional power $\sigma>0$, we first make use of the directional diffusion to obtain the directional regularization and then use the H\"{o}rmander-Mikhlin multiplier theorem to obtain the regularization in directions in which the directional diffusion is missing. More technical details can be found in the proof of Theorem \ref{main} in Section \ref{proofmain}.

\vskip .1in
The rest of this paper contains the proof of Theorem \ref{main}.
Section \ref{proofmain} is divided into three subsections which successively
provide more and more regular global bounds.

\vskip .3in
\section{Proof of Theorem \ref{main}}
\label{proofmain}
\setcounter{equation}{0}

This section proves Theorem \ref{main}. As we know, the core part of the
proof is the global {\it a priori} bounds. For the sake of clarity, we divide this
section into three subsections. The first subsection supplies the global $H^1$-bound, which relies on the equations of the vorticity $\omega$ and the current density $j=\nabla \times b$. The second subsection proves the global bounds for
$\|\nabla b\|_{L^\infty_t L^q}$ with any $1<q<\infty$, and for $\|\Delta b\|_{L^1_t L^q}$ and $\|\omega\|_{L^\infty_t L^q}$. The proof makes use of the
special structure of the nonlinear terms and a lemma assessing the behavior
of the 1D kernel function on Lebesgue spaces. The third subsection establishes
the global bounds for $\|\nabla j\|_{L^1_t L^\infty}$ and $\|\omega\|_{L^\infty_t L^\infty}$. To prose these global bounds, the key is to show that $\|\Lambda^\sigma
\Delta b\|_{L^1_t L^q}$ is globally bounded for any $0<\sigma< 2\beta-2$. We need to overcome the difficulty due to the lack of full fractional diffusion. The strategy
here is to first obtain the regularization along the direction of the
diffusion and then make use of the H\"{o}rmander-Mikhlin multiplier theorem to obtain the regularization in other directions.

\vskip .1in
\subsection{Global $H^1$ bound for $(u,b)$} This subsection proves that $(u, b)$
admits the following global $H^1$-bound.

\begin{prop} \label{h1bound}
Assume $(u_0, b_0)$ obeys the conditions stated in Theorem \ref{main}.
Let $(u, b)$ be the corresponding solution of (\ref{MHD}). Then $(u, b)$ satisfies
\ben
&& \hskip -.2in \|(u,b)\|_{L^2}^2
+ 2 \eta\, \int_0^t H(b)(\tau) \,d\tau
= \|(u_0,b_0)\|_{L^2}^2, \label{L2es}\\
&&  \hskip -.2in \|(\om, j)\|_{L^2}^2 + \eta \int_0^t \,H(\nabla b) (\tau)\, d\tau \notag\\
&&\qquad
\le C\,(1+\|(\om_0, j_0)\|_{L^2}) \exp\left(C\,(1+t)\,\|(u_0,b_0)\|_{L^2}^2\right),
 \label{h1es}\een
where $C=C(\beta)$ is a constant and
$$
H(b) =\|(\Lambda_2^\beta  b_1, \Lambda_1^\beta b_2)\|_{L^2}^2.
$$
\end{prop}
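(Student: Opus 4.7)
The plan is to prove the two displayed estimates in turn, the first by a standard energy identity and the second by a $(\om,j)$-analysis whose structural crux is the identification of the anisotropic hyperdissipation with $H(\na b)$. For (\ref{L2es}) I would take the $L^2$ inner product of the momentum equation with $u$ and of the $b_i$-equations with $b_i$ and sum. The pressure drops by $\na\cdot u=0$; the transport terms $\int(u\cdot\na)u\cdot u\,dx$ and $\int(u\cdot\na)b_i\,b_i\,dx$ vanish for the same reason; and the magnetic coupling $\int(b\cdot\na)b\cdot u\,dx+\int(b\cdot\na)u\cdot b\,dx$ cancels after an integration by parts using $\na\cdot b=0$. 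The hyperdissipative contributions assemble into $\eta H(b)$, and integrating in time gives (\ref{L2es}).

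For (\ref{h1es}), introduce $\om=\pp_1 u_2-\pp_2 u_1$ and $j=\pp_1 b_2-\pp_2 b_1$, and derive, by taking the curl of the momentum equation and forming $\pp_1$ of the $b_2$-equation minus $\pp_2$ of the $b_1$-equation,
\[
\pp_t\om+u\cdot\na\om=b\cdot\na j,\qquad
\pp_t j+u\cdot\na j+\eta\Lambda_1^{2\beta}\pp_1 b_2-\eta\Lambda_2^{2\beta}\pp_2 b_1=b\cdot\na\om+\mathcal N,
\]
where $\mathcal N$ collects the commutator terms which, after using $\na\cdot u=\na\cdot b=0$, reduce to $2\pp_1 b_1(\pp_1 u_2+\pp_2 u_1)-2\pp_1 u_1(\pp_1 b_2+\pp_2 b_1)$. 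Pairing these equations in $L^2$ with $\om$ and $j$ and summing, the $u$-transport terms vanish and the coupling $\int\om\,(b\cdot\na)j\,dx+\int j\,(b\cdot\na)\om\,dx=0$ by $\na\cdot b=0$.

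The structural heart of the argument is to establish
\[
D:=\int\bigl(\Lambda_1^{2\beta}\pp_1 b_2-\Lambda_2^{2\beta}\pp_2 b_1\bigr)\,j\,dx=H(\na b).
\]
The diagonal contributions give $\|\Lambda_1^\beta\pp_1 b_2\|_{L^2}^2+\|\Lambda_2^\beta\pp_2 b_1\|_{L^2}^2$. For the cross term $-\int\Lambda_1^{2\beta}\pp_1 b_2\,\pp_2 b_1\,dx$, I would integrate by parts in $x_2$ and then invoke $\pp_2 b_2=-\pp_1 b_1$ to rewrite it as $-\int\Lambda_1^{2\beta}\pp_1^2 b_1\cdot b_1\,dx=\|\Lambda_1^\beta\pp_1 b_1\|_{L^2}^2$; symmetrically, the other cross term produces $\|\Lambda_2^\beta\pp_2 b_2\|_{L^2}^2$. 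A final appeal to $\na\cdot b=0$ recombines the four pieces into $\|\Lambda_2^\beta\na b_1\|_{L^2}^2+\|\Lambda_1^\beta\na b_2\|_{L^2}^2=H(\na b)$.

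The remaining nonlinear integral satisfies $|\int\mathcal N\,j\,dx|\lesssim\|\om\|_{L^2}\|j\|_{L^2}\|\na j\|_{L^2}$ by H\"older, the 2D Ladyzhenskaya inequality $\|f\|_{L^4}^2\lesssim\|f\|_{L^2}\|\na f\|_{L^2}$, the Calder\'on--Zygmund bound $\|\na u\|_{L^p}\lesssim\|\om\|_{L^p}$, and the Biot--Savart relations $\|\na b\|_{L^2}=\|j\|_{L^2}$ and $\|\na^2 b\|_{L^2}=\|\na j\|_{L^2}$. The hypothesis $\beta>1$ enters through the directional interpolation $\|\na j\|_{L^2}^2\le C\|j\|_{L^2}^{2(\beta-1)/\beta}\,H(\na b)^{1/\beta}$ (obtained by a Fourier-side H\"older); Young's inequality then absorbs a small multiple of $H(\na b)$ into the dissipation, leaving a differential inequality that closes by Gr\"onwall in combination with the $L^2$ control supplied by (\ref{L2es}). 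The principal obstacle is the structural dissipation identity $D=H(\na b)$: without the Fourier-side reduction based on $\na\cdot b=0$, the anisotropic hyperdissipation does not manifestly control the directional components of $\na b$ that are missing from the equations.
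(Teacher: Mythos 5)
Your proposal is correct and follows essentially the same route as the paper: the same energy identity for (\ref{L2es}), the same $(\om,j)$ system with the same nonlinearity $Q$, the same Ladyzhenskaya/Calder\'on--Zygmund treatment of $\int Q\,j\,dx$, and the same Fourier-side use of $\beta>1$ to absorb $\|\na j\|_{L^2}^2$ into $H(\na b)$ (you via a multiplicative H\"older interpolation, the paper via the additive inequality $\xi_k^2\le\frac{\beta-1}{\beta}+\frac1\beta\xi_k^{2\beta}$ together with $\|\na b\|_{L^2}\lesssim\|j\|_{L^2}$). Your explicit verification of the dissipation identity $\int(\Lambda_1^{2\beta}\pp_1 b_2-\Lambda_2^{2\beta}\pp_2 b_1)\,j\,dx=H(\na b)$ using $\na\cdot b=0$ is exactly the step the paper asserts without detail in passing to (\ref{h1}), and your computation of it is correct.
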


\begin{proof}
The global $L^2$ bound is obvious. Dotting (\ref{MHD}) with $(u, b)$,
integrating by parts and using
$\nabla\cdot u=0$ and $\nabla\cdot b=0$, we obtain (\ref{L2es}).
To obtain the global $H^1$ bound, we use the equations of the
vorticity $\om=\nabla\times u$ and the current density $j= \nabla\times b$,
\begin{equation}\label{omj}
\left\{\aligned
&\partial_{t} \om +(u \cdot \nabla) \om = b\cdot \nabla j,\\
&\partial_t j + u\cdot \nabla j + (\Lambda_1^{2\beta} \pp_1 b_2-\Lambda_2^{2\beta} \pp_2 b_1)=b\cdot\nabla\om + Q(u,b),
\endaligned\right.
\end{equation}
where
$$
Q(u,b) = 2 \pp_1 b_1(\pp_2 u_1+ \pp_1 u_2) - 2 \pp_1 u_1(\pp_2 b_1+ \pp_1 b_2).
$$
Integrating by parts and using $\nabla\cdot u=0$ and $\nabla\cdot b=0$, we have
\begin{equation}\label{h1}
\frac12 \frac{d}{dt} \|(\om, j)\|_{L^2}^2 + \eta \,H(\nabla b) = I,
\end{equation}
where
$$
H(\nabla b) = \| \Lambda_2^\beta \nabla b_1\|_{L^2}^2 + \| \Lambda_1^\beta\nabla b_2\|_{L^2}^2, \qquad I = \int Q(u,b)\, j\,dx.
$$
It suffices to estimate a typical term in $I$,
$$
I_1 = 2 \int \pp_1 b_1\, \pp_2 u_1 \, j\, dx
$$
By the boundedness of Zygmund-Calderon operators and a standard
Sobolev inequality,
$$
|I_1| \le C\, \|j\|^2_{L^4} \,\|\om\|_{L^2}
\le \frac{\beta \eta}{64} \|\nabla j\|_{L^2}^2
+ C\, \|j\|_{L^2}^2 \|\om\|_{L^2}^2.
$$
Due to the elementary inequality, for $\beta>1$,
$$
\xi_k^2 \le \frac{\beta-1}{\beta} + \frac1\beta \xi_k^{2\beta},\quad k=1,2,
$$
we have
$$
\|j\|_{L^2}^2 \le 2\,(\|\partial_1 b_2\|_{L^2}^2 + \|\partial_2 b_1\|_{L^2}^2) \le \frac{2(\beta-1)}{\beta} \|b\|_{L^2}^2 +
\frac2\beta H(b)
$$
and
$$
\|\nabla j\|_{L^2}^2 \le \frac{2(\beta-1)}{\beta} \|\nabla b\|_{L^2}^2 + \frac2\beta H(\nabla b).
$$
Inserting the bounds above in (\ref{h1}) yields
$$
\frac{d}{dt} \|(\om, j)\|_{L^2}^2 + \eta \,H(\nabla b) \le
\,C \|j\|^2_{L^2} (1+ \|\om\|_{L^2}^2),
$$
which yields the global $H^1$ bound
$$
\|(\om, j)\|_{L^2}^2 + \nu \int_0^t \,H(\nabla b) \, d\tau
\le C\,(1+\|(\om_0, j_0)\|^2_{L^2}) \exp{\int_0^t \|j\|_{L^2}^2\,d\tau}.
$$
This completes the proof of Proposition \ref{h1bound}.
\end{proof}

\vskip .1in
\subsection{Global bounds on $\|\nabla b\|_{L^\infty_t L^q}$, $\|\Delta b\|_{L^1_t L^q}$ and $\|\omega\|_{L^\infty_t L^q}$ with $1<q<\infty$}
This subsection uses the integral form of
the equation of $b$ to prove the following proposition.

\begin{prop} \label{good}
Assume $(u_0, b_0)$ obeys the conditions stated in Theorem \ref{main}.
Let $(u, b)$ be the corresponding solution of (\ref{MHD}) with $\beta>1$. Then $(u, b)$ obeys the
following global {\it a priori} bounds:
\begin{enumerate}
\item[(1)] For $1< q < \infty$ and $t>0$,
\begin{equation}\label{good1}
\|\nabla b(t)\|_{L^q(\mathbb{R}^2)} \le C(t, u_0, b_0), \quad \|j(t)\|_{L^q(\mathbb{R}^2)} \le C(t, u_0, b_0).
\end{equation}
A special consequence is the $L^\infty_t L^\infty(\mathbb{R}^2)$ bound for $b$,
$$
\|b\|_{L^\infty_t (L^\infty(\mathbb{R}^2))} \le C\,(\|b\|_{L^\infty_t L^2} + \|\nabla b\|_{L^\infty_t L^q}) = C(t, u_0, b_0).
$$
\item[(2)] For any $1<q<\infty$ and $t>0$,
\begin{equation}\label{good2}
\|\om\|_{L^\infty_t L^q} \le C(t, u_0, b_0), \quad \|\nabla j\|_{L^1_t L^q} \le C(t, u_0, b_0).
\end{equation}
\end{enumerate}
\end{prop}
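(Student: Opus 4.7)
The plan is to elevate the Duhamel representation (\ref{vv1}) and its counterpart for $b_2$ from a symbolic identity into quantitative $L^q$ estimates, using the divergence-free structure (\ref{vv0}) to move one of the two derivatives onto the 1D kernel. The backbone is a scaling lemma for the 1D kernel $g$ associated with the Fourier symbol $e^{-t|\xi_2|^{2\beta}}$: writing $g(x_2,t)=t^{-1/(2\beta)} g_0(t^{-1/(2\beta)} x_2)$ one obtains $\|g(t)\|_{L^1_{x_2}}\le C$, $\|\partial_2 g(t)\|_{L^1_{x_2}}\le C\,t^{-1/(2\beta)}$, and $\|\partial_2^2 g(t)\|_{L^1_{x_2}}\le C\,t^{-1/\beta}$, with analogous bounds for the kernel $h$ of $e^{-t|\xi_1|^{2\beta}}$. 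Because $\beta>1$, both singular exponents are strictly less than $1$, so the Volterra-type integrals below are only weakly singular and close by a generalized Gr\"{o}nwall inequality.

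For part (1), the delicate derivatives are $\partial_1 b_1$ and $\partial_2 b_2$: the dissipation in the $b_1$ equation provides no direct smoothing in $x_1$, and symmetrically for $b_2$, whereas $\partial_2 b_1$ and $\partial_1 b_2$ are controlled more easily by keeping one derivative on the kernel side. Applying $\partial_1$ to (\ref{vv1}) and integrating by parts in $x_2$ to shift a $\partial_2$ onto the kernel gives
\begin{equation*}
\partial_1 b_1(t) = g(t)\ast_2 \partial_1 b_{01} + \int_0^t \partial_2 g(t-\tau)\ast_2 \partial_1(b_2 u_1 - u_2 b_1)\,d\tau,
\end{equation*}
and a Minkowski--Young inequality in $x_2$ yields
\begin{equation*}
\|\partial_1 b_1(t)\|_{L^q} \le C\|\nabla b_0\|_{L^q} + C\int_0^t (t-\tau)^{-\frac{1}{2\beta}} \|\partial_1(b_2 u_1 - u_2 b_1)\|_{L^q}\,d\tau.
\end{equation*}
Expanding the nonlinearity by Leibniz and applying H\"{o}lder with exponents $1/p+1/r=1/q$ for some $r>2$, the $L^r$ norms of $u$ and $b$ are absorbed into Proposition \ref{h1bound} by 2D Sobolev embedding, while $\|\nabla u\|_{L^p}\lesssim \|\om\|_{L^p}$ by Calder\'{o}n--Zygmund. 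The same argument applied to the representation for $b_2$ controls $\partial_2 b_2$, leading to a closed estimate of the form
\begin{equation*}
\|\nabla b(t)\|_{L^q}\le C + C\int_0^t (t-\tau)^{-\frac{1}{2\beta}}\bigl(\|\nabla b(\tau)\|_{L^q}+\|\om(\tau)\|_{L^q}\bigr)\,d\tau.
\end{equation*}

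For part (2), testing the vorticity equation in (\ref{omj}) against $|\om|^{q-2}\om$ and using $\nabla\cdot u=0$ produces
\begin{equation*}
\frac{1}{q}\frac{d}{dt}\|\om\|_{L^q}^q \le \|b\|_{L^\infty}\|\nabla j\|_{L^q}\|\om\|_{L^q}^{q-1};
\end{equation*}
the embedding $W^{1,q}\hookrightarrow L^\infty$ for $q>2$ promotes the $\nabla b$ bound from part (1) to $b\in L^\infty_{t,x}$. The missing input is $\|\nabla j\|_{L^1_t L^q}$, equivalently $\|\Delta b\|_{L^1_t L^q}$. The pure vertical derivative $\partial_2^2 b_1$ and the mixed derivative $\partial_1\partial_2 b_1$ come from (\ref{vv1}) by moving up to two factors of $\partial_2$ onto $g$ and invoking $\|\partial_2^2 g(t)\|_{L^1}\le C t^{-1/\beta}$, while $\partial_1^2 b_1=-\partial_1\partial_2 b_2$ by $\nabla\cdot b=0$ is supplied by the analogous representation for $b_2$. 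The main obstacle is that the three estimates for $\nabla b$, $\om$, and $\Delta b$ are coupled through bilinear terms $\partial_i\partial_j(b_k u_\ell)$ that must be split by H\"{o}lder and Gagliardo--Nirenberg and then absorbed either into the $L^1_t$-control of $H(\nabla b)$ from Proposition \ref{h1bound} or into the left-hand sides through the weakly singular Gr\"{o}nwall lemma. Ensuring that every bilinear factor carries a time exponent strictly less than $1$, and that no term feeds back on itself with an order-one coefficient, is the delicate bookkeeping that closes the loop for each $\beta>1$ and each $1<q<\infty$.
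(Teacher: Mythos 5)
Your toolkit (the Duhamel formula, the $L^1$ kernel bounds obtained from the scaling of $g$, and the structural identity (\ref{vv0})) is the same as the paper's, but the way you deploy it in part (1) leaves a genuine gap that the paper's argument is specifically designed to avoid. The paper never estimates $\partial_1 b_1$ and $\partial_2 b_2$ directly: it estimates $\partial_2 b_1$ and $\partial_1 b_2$, i.e.\ the current density $j$, and then recovers all of $\nabla b$ from $j$ by Riesz transforms, $\|\nabla b\|_{L^q}\le C\|j\|_{L^q}$, using $\nabla\cdot b=0$. More importantly, for $\partial_2 b_1$ the applied derivative $\partial_2$ \emph{and} the structural derivative $\partial_2$ supplied by (\ref{vv0}) are both integrated by parts onto the one-dimensional kernel, so the Duhamel integrand is $\|\partial_2\partial_2 g(t-\tau)\|_{L^1_{x_2}}\,\|b_2u_1-u_2b_1\|_{L^q}\le C(t-\tau)^{-1/\beta}\|u\|_{L^{2q}}\|b\|_{L^{2q}}$, and the last product is bounded a priori by the $H^1$ estimate of Proposition \ref{h1bound}. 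Part (1) therefore closes by a direct time integration, with no Gr\"onwall argument and no reference to $\|\om\|_{L^q}$.

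In your version you keep $\partial_1$ on the nonlinearity, so the integrand contains $\|\partial_1(b_2u_1-u_2b_1)\|_{L^q}$, which after Leibniz and H\"older produces terms such as $\|b_2\,\partial_1 u_1\|_{L^q}\le \|\nabla u\|_{L^p}\|b\|_{L^r}$ with $1/p+1/r=1/q$ and $r<\infty$, hence $p>q$. The right-hand side of your claimed closed inequality thus actually involves $\|\om\|_{L^p}$ and $\|\nabla b\|_{L^p}$ for an exponent strictly larger than $q$ (to land exactly in $L^q$ you would need $u\in L^\infty$, which is not available at this stage), and $\|\om\|_{L^p}$ is in any case only produced in part (2), which itself requires $\|\nabla j\|_{L^1_tL^q}$. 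The ``delicate bookkeeping'' you defer is therefore not mere bookkeeping: as set up, the system of estimates does not close in any single $q$. The fix is precisely the paper's two observations: reduce $\nabla b$ to $j$, and push both derivatives onto the kernel so that the nonlinearity enters undifferentiated. Your outline of part (2) is then essentially the paper's, with one caveat: for $\partial_2\partial_2 b_1$ one must \emph{not} also invoke the structural identity, since three factors of $\partial_2$ on $g$ cost $(t-\tau)^{-3/(2\beta)}$, which is not integrable in time for $1<\beta\le 3/2$; the paper instead keeps $b\cdot\nabla u_1-u\cdot\nabla b_1$ undifferentiated there and uses $b\in L^\infty_{x,t}$ and $\nabla b\in L^\infty_tL^{2q}$ from part (1), reserving the structural identity for the mixed derivative $\partial_1\partial_2 b_1$.
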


\vskip .1in
To prove Proposition \ref{good}, we state a few properties for the kernel function
associated with the 1D fractional heat operator.
\begin{lemma} \label{jjj}
Let $\beta \ge 1$. Let $t>0$ and denote by $g=g(x_2, t)$ the 1D inverse Fourier transform of $e^{-|\xi_2|^{2\beta} t}$, namely
\begin{equation}\label{gdef}
g(x_2, t) = \int_{\mathbb{R}} e^{-t\,|\xi_2|^{2\beta}}\, e^{ix_2 \xi_2}\,d\xi_2.
\end{equation}
Then $g$ satisfies the following properties:
\begin{enumerate}
\item[(a)] For any $t>0$,
$$
g(x_2, t) = t^{-\frac1{2\beta}} \, g\left(\frac{x_2}{t^{\frac1{2\beta}}}, 1\right).
$$
\item[(b)] For any integer $m\ge 0$, any $1\le r\le \infty$ and any $t>0$,
\begin{equation}\label{gm}
\|\partial^m_{x_2} g(x_2, t)\|_{L^r(\mathbb{R})} \le C\, t^{-\frac{m}{2\beta}-\frac{1}{2\beta}(1-\frac1r)},
\end{equation}
which especially implies, for any $1\le p\le q\le \infty$ and  $f\in L^p(\mathbb{R})$,
$$
\|(\partial^m_{x_2} g(x_2, t))\ast_2 f\|_{L^q(\mathbb{R})} \le C\, t^{-\frac{m}{2\beta}-\frac{1}{2\beta}(\frac1p-\frac1q)}\,\|f\|_{L^p(\mathbb{R})}.
$$
\item[(c)] For any fractional $\sigma>0$, any $1 \le r \le\infty$ and any $t>0$,
$$
\|\Lambda^\sigma_{2} g(x_2, t)\|_{L^r(\mathbb{R})} \le C\, t^{-\frac{\sigma}{2\beta}-\frac{1}{2\beta}(1-\frac1r)},
$$
where $\Lambda^\sigma_2 g(x_2, t)$ is defined via the Fourier transform, for fixed $t>0$,
$$
\widehat{\Lambda^\sigma_2 g}(x_2, t) = |\xi_2|^\sigma e^{-|\xi_2|^{2\beta} t}.
$$
Especially,  for any $1\le p \le q \le \infty$ and  $f\in L^p(\mathbb{R})$,
$$
\|(\Lambda^\sigma_{x_2} g(x_2, t))\ast_2 f\|_{L^q(\mathbb{R})} \le C\, t^{-\frac{\sigma}{2\beta}-\frac{1}{2\beta}(\frac1p-\frac1q)}\,
\|f\|_{L^p(\mathbb{R})}.
$$
\end{enumerate}
\end{lemma}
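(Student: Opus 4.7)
The plan is to prove all three parts by reducing to the case $t=1$ via scaling. For (a), substitute $\eta = t^{1/(2\beta)}\xi_2$ in the defining integral $g(x_2,t)=\int_{\mathbb{R}} e^{-t|\xi_2|^{2\beta}}e^{ix_2\xi_2}\,d\xi_2$; the exponent becomes $-|\eta|^{2\beta}$, the phase becomes $i(x_2/t^{1/(2\beta)})\eta$, and the Jacobian $t^{-1/(2\beta)}$ factors out, yielding the claimed identity at once.

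For (b), combining (a) with $m$-fold differentiation gives $\partial_{x_2}^m g(x_2,t) = t^{-(m+1)/(2\beta)}\phi^{(m)}(x_2 t^{-1/(2\beta)})$ where $\phi(y) := g(y,1)$, so taking the $L^r(\mathbb{R})$-norm in $x_2$ and changing variable $y = x_2 t^{-1/(2\beta)}$ reduces the desired bound to $\|\phi^{(m)}\|_{L^r(\mathbb{R})} < \infty$ for all $m \ge 0$ and $1 \le r \le \infty$. Since $\phi^{(m)}$ is the Fourier transform of $(i\xi)^m e^{-|\xi|^{2\beta}}$, the $L^\infty$ bound is trivial, the $L^2$ bound is Plancherel, and the $L^1$ bound follows by integrating twice by parts in $\xi$: the key point is that for $\beta\ge 1$ the worst term in the second $\xi$-derivative of the symbol scales like $|\xi|^{2\beta-2}$ near the origin, which is bounded since $2\beta-2 \ge 0$, so this derivative is in $L^1_\xi(\mathbb{R})$ and hence $|\phi^{(m)}(y)|\le C(1+y^2)^{-1}$. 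Interpolation supplies all intermediate $r$, and the convolution estimate follows from Young's inequality with exponents satisfying $1+1/q = 1/p + 1/a$, which forces $1-1/a = 1/p - 1/q$, matching exactly the exponent of $t$ produced by the kernel bound.

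For (c), the same scaling argument yields $\Lambda_2^\sigma g(x_2,t) = t^{-(\sigma+1)/(2\beta)}\psi(x_2 t^{-1/(2\beta)})$ with $\psi(y) := \Lambda_2^\sigma g(y,1)$, so the task reduces to proving $\psi \in L^r(\mathbb{R})$ for every $1 \le r \le \infty$. The $L^\infty$ and $L^2$ bounds are routine from $|\xi|^\sigma e^{-|\xi|^{2\beta}} \in L^1_\xi \cap L^2_\xi$. \textbf{The main obstacle} is the $L^1$ bound when $0<\sigma<1$, where the symbol is only H\"{o}lder continuous at the origin and two integrations by parts are not directly available. My plan is to split the symbol with a smooth cutoff $\chi \in C_c^\infty(\mathbb{R})$ identically equal to $1$ near $\xi=0$: the high-frequency piece $(1-\chi(\xi))|\xi|^\sigma e^{-|\xi|^{2\beta}}$ is smooth and rapidly decaying, so its Fourier inverse is Schwartz and in $L^1$; the low-frequency piece $\chi(\xi)|\xi|^\sigma e^{-|\xi|^{2\beta}}$ is a compactly supported smooth multiple of the homogeneous distribution $|\xi|^\sigma$, whose Fourier transform has the form $c|x_2|^{-\sigma-1}$ plus a smooth remainder, and this tail is integrable at infinity precisely because $\sigma > 0$. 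Combining the pieces gives $\psi \in L^1 \cap L^\infty$, and interpolation plus Young's inequality then complete the proof just as in (b).
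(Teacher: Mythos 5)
Your proposal is correct, and for parts (a) and (b) it is essentially the paper's own argument: the self-similar scaling reduces everything to $t=1$, the $L^1$ bound on $\phi^{(m)}=\partial^m_{x_2}g(\cdot,1)$ comes from the weight identity $(1+x_2^2)\phi^{(m)}(x_2)=\int e^{ix_2\xi}(1-\partial_\xi^2)\bigl[(i\xi)^m e^{-|\xi|^{2\beta}}\bigr]d\xi$ together with the observation that the worst term $|\xi|^{2\beta-2}$ is bounded near the origin when $\beta\ge 1$, and interpolation plus Young's inequality finish the job. Your exponent bookkeeping ($t^{-(m+1)/(2\beta)}$ from the scaling, the gain of $t^{1/(2\beta r)}$ from the change of variables, and the Young exponent $1-\frac1a=\frac1p-\frac1q$) all checks out.

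Where you genuinely depart from the paper is part (c), and the departure is to your credit. The paper dismisses (c) with ``the proof \ldots is similar,'' but the $(1-\partial_\xi^2)$ weight trick does \emph{not} carry over verbatim: $\partial_\xi^2\bigl(|\xi|^\sigma e^{-|\xi|^{2\beta}}\bigr)$ contains the term $\sigma(\sigma-1)|\xi|^{\sigma-2}e^{-|\xi|^{2\beta}}$, which fails to be locally integrable precisely when $0<\sigma<1$ --- and this is the regime actually used later in the paper, where $\sigma\in(0,2\beta-2)$ can be arbitrarily small. Your cutoff decomposition is the standard and correct repair: the high-frequency piece is Schwartz, and the low-frequency piece $\chi(\xi)|\xi|^\sigma e^{-|\xi|^{2\beta}}$ satisfies Mikhlin-type bounds $|\partial_\xi^k(\cdot)|\le C_k|\xi|^{\sigma-k}$ on its (compact) support, whence its inverse Fourier transform obeys $|\cdot|\le C\min(1,|x_2|^{-1-\sigma})\in L^1(\mathbb{R})$ because $\sigma>0$. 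One small caution on wording: $e^{-|\xi|^{2\beta}}$ is only $C^{\lfloor 2\beta\rfloor}$ at the origin for non-integer $\beta$, so ``smooth multiple of $|\xi|^\sigma$'' is a slight overstatement; but the dyadic (or homogeneous-distribution) argument only needs the Mikhlin-type derivative bounds, which do hold, so the conclusion stands. In short, your write-up fills a real gap that the paper leaves implicit.
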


\begin{proof}[Proof of Lemma \ref{jjj}] (a) follows directly from the
definition of $g$ in (\ref{gdef}). To prove (b), we first show that
the $L^1$-norm of $g(x_2, 1)$ is finite. In fact,
\begin{equation}\label{ff}
(1+ x_2^2)\,  g(x_2,1) = \int e^{ix_2 \xi_2}\, (1-\partial^2_{\xi_2})\,e^{-|\xi_2|^{2\beta}}\,d\xi_2.
\end{equation}
It is clear that, for $\beta \ge1$, the right-hand side of (\ref{ff}) is finite. Thus,
$$
|g(x_2, 1)| \le C\, (1+ x_2^2)^{-1} \quad\mbox{and} \quad \|g(\cdot,1)\|_{L^1(\mathbb{R})} \le C.
$$
According to (a), for any $t>0$,
$$
\|g(\cdot, t)\|_{L^1(\mathbb{R})} = \|g(\cdot,1)\|_{L^1(\mathbb{R})} \le C.
$$
For any $t>0$,
$$
\|g(\cdot, t)\|_{L^\infty(\mathbb{R})} \le \|e^{-t\,|\xi_2|^{2\beta}}\|_{L^1(\mathbb{R})} = C\, t^{-\frac1{2\beta}},
$$
where $C$ is a constant independent of $t$.
Therefore, for any $1\le r\le \infty$, by a simple interpolation inequality,
$$
\|g(\cdot, t)\|_{L^\infty(\mathbb{R})} \le \|g(\cdot, t)\|_{L^\infty(\mathbb{R})}^{1-\frac1r}\, \|g(\cdot, t)\|^{\frac1{r}}_{L^1(\mathbb{R})} \le C\,  t^{-\frac{1}{2\beta}(1-\frac1r)}.
$$
This proves (\ref{gm}) with $m=0$. The general case $m>0$ can be  shown
by repeating the process above with $\partial^m_{x_2} g(x_2, t)$ whose corresponding Fourier transform is  $\xi_2^m e^{-|\xi_2|^{2\beta} t}$. The proof
of the results in (c) for the fractional derivative $\sigma$ is similar.
We omit further details.
\end{proof}


\vskip .1in
\begin{proof}[Proof of Proposition \ref{good}]
To prove (\ref{good1}), we start with the integral representations of $b_1$
and $b_2$,
\ben
&& b_1 = g(t) \ast_2 b_{01} + \int_0^t g(t-\tau)\ast_2 (b\cdot\nabla u_1 - u\cdot\nabla b_1)\,d\tau, \label{b1in}\\
&& b_2 = h(t) \ast_1 b_{02} + \int_0^t h(t-\tau)\ast_1(b\cdot\nabla u_2 - u\cdot\nabla b_2)\,d\tau, \label{b2in}
\een
where $g$ and $h$ denote the 1D kernel functions associated with
the Fourier multiplier $e^{-t |\xi_2|^{2\beta}}$ and $e^{-t |\xi_1|^{2\beta}}$, namely
$$
g(x_2, t) = \int_{\mathbb{R}} e^{-t\,|\xi_2|^{2\beta}}\, e^{ix_2 \xi_2}\,d\xi_2,\qquad h(x_1, t) = \int_{\mathbb{R}} e^{-t\,|\xi_1|^{2\beta}}\, e^{ix_1 \xi_1}\,d\xi_1
$$
and the convolution notations are defined as
\beno
g(t) \ast_2 b_{01} = \int_{\mathbb{R}} g(x_2- y_2, t) \,b_{01}(x_1, y_2)\,dy_2, \\
h(t) \ast_1 b_{02} = \int_{\mathbb{R}} h(x_1- y_1, t) \,b_{01}(y_1, x_2)\,dy_1.
\eeno
To prove (\ref{good1}), it suffices to prove the bounds for
for $\pp_2 b_1$  and $\pp_1 b_2$. Since, if
$$
\|\pp_2 b_1\|_{L^q} \le C(t, u_0, b_0), \qquad
\|\pp_1 b_2\|_{L^q} \le C(t, u_0, b_0),
$$
then
$$
\|j\|_{L^q} \le C(t, u_0, b_0), \qquad
\|\nabla b\|_{L^q} \le C\, \|j\|_{L^q} \le C(t, u_0, b_0).
$$
To show the bound for
$\pp_2 b_1$, we write
\begin{equation}\label{sss}
b\cdot \nabla u_1 - u\cdot\nabla b_1 = \pp_1(b_1 u_1) + \pp_2(b_2 u_1) - \pp_1(b_1 u_1) - \pp_2 (u_2 b_1) =\pp_2(b_2 u_1 -u_2 b_1)
\end{equation}
and thus
$$
\pp_2 b_1 = \pp_2(g(t) \ast_2 b_{01}) + \int_0^t \pp_2 \p_2 g(t-\tau)\ast_2 (b_2 u_1 -u_2 b_1)(\tau)\,d\tau.
$$
First we take $L^q_{x_1}$ each side to obtain
$$
\|\pp_2 b_1\|_{L^q_{x_1}} \le |g(t)| \ast_2 \|\pp_2 b_{01}\|_{L^q_{x_1}} + \int_0^t |\pp_2 \p_2 g(t-\tau)|
\ast_2 \|(b_2 u_1 -u_2 b_1)(\tau)\|_{L^q_{x_1}}\, d\tau
$$
We then take $L^q_{x_2}$ each side and apply Young's inequality
for convolution to obtain
\begin{equation} \label{pp1}
\|\pp_2 b_1\|_{L^q} \le \|g(t)\|_{L^1} \,\|\pp_2 b_{01}\|_{L^q}
+ \int_0^t \|\pp_2 \p_2 g(t-\tau)\|_{L^1_{x_2}} \|b_2 u_1 -u_2 b_1
\|_{L^q}(\tau) \,d\tau.
\end{equation}
By Lemma \ref{jjj},
\begin{equation} \label{tb}
\|\pp_2 \p_2 g(t-\tau)\|_{L^1_{x_2}}  \le C\, (t-\tau)^{-\frac1\beta},
\end{equation}
where $C$ is a constant depending on $\beta$ only. By H\"{o}lder's inequality and Sobolev's inequality,
$$
\|b_2 u_1 -u_2 b_1
\|_{L^q} \le \|u\|_{L^{2q}} \,\|b\|_{L^{2q}}
\le C\,(\|u\|_{L^2} + \|\om\|_{L^2})\, (\|b\|_{L^2} + \|j\|_{L^2}).
$$
Inserting these estimates in (\ref{pp1}) yields
$$
\|\pp_2 b_1(t)\|_{L^q} \le C\,\|\pp_2 b_{01}\|_{L^q} + C\, t^{1-\frac1\beta} \, (\|u\|_{L^\infty_t L^2} +\|\om\|_{L^\infty_t L^2})\,(\|b\|_{L^\infty_t L^2} + \|j\|_{L^\infty_tL^2}).
$$
Similarly, for any $t>0$,
$$
\|\pp_1 b_2(t)\|_{L^q} \le C\,\|\pp_1 b_{02}\|_{L^q} + C\, t^{1-\frac1\beta} \,
(\|u\|_{L^\infty_t L^2} +\|\om\|_{L^\infty_t L^2})\,(\|b\|_{L^\infty_t L^2} + \|j\|_{L^\infty_tL^2}).
$$
Consequently
\beno
\|\nabla b\|_{L^\infty_tL^q} &\le& C\, \|j\|_{L^\infty_t L^q} \\
&\le& C\,\|\nabla b_0\|_{L^p} + C\, t^{1-\frac1\beta} \, (\|u\|_{L^\infty_t L^2} +\|\om\|_{L^\infty_t L^2})\,(\|b\|_{L^\infty_t L^2} + \|j\|_{L^\infty_tL^2}).
\eeno
An elementary Sobolev inequality then implies that, for any
$q>2$,
$$
\|b\|_{L^\infty_t L^\infty} \le C (\|b\|_{L^\infty_t L^2} + \|\nabla b\|_{L^\infty_t L^q})= C(t, u_0, b_0).
$$

\vskip .1in
Next we prove (\ref{good2}). To do so, we combine the estimates of
$\|\om\|_{L^\infty_t L^q}$ with $\|\nabla j\|_{L^1_t L^q}$. It follows from the vorticity equation (see (\ref{omj})) that
\begin{equation} \label{ooo}
\|\om(t)\|_{L^q} \le \|\om_0\|_{L^q} + \|b\|_{L^\infty_{x,t}} \int_0^t \|\nabla j(\tau)\|_{L^q}\,d\tau.
\end{equation}
We then bound $\|\pp_2 \pp_2 b_1\|_{L^q}$ and $\|\pp_1 \pp_2 b_2\|_{L^q}$ in terms of $\|\om\|_{L^q}$. Applying $\pp_2 \pp_2$ to (\ref{b1in}) yields
$$
\pp_2 \pp_2 b_1 = \pp_2 \pp_2 (g(t) \ast_2 b_{01}) + \int_0^t \pp_2 \pp_2 g(t-\tau)\ast_2 (b\cdot\nabla u_1 - u\cdot\nabla b_1)\,d\tau.
$$
As in the proof of (\ref{pp1}), we have
\beno
\|\pp_2 \pp_2 b_1\|_{L^q} &\le&
\|\pp_2\pp_2(g(t)\ast_2 b_{01})\|_{L^q}\\
&& + \int_0^t \|\pp_2\pp_2 g(t-\tau)\|_{L^1_{x_2}} \|b\cdot\nabla u_1 -u\cdot\nabla b_1)\|_{L^q} \,d\tau.
\eeno
Furthermore, by Sobolev's inequality,
\beno
\|b\cdot\nabla u_1 -u\cdot\nabla b_1)\|_{L^q} &\le& \|b\|_{L^\infty}
\|\om\|_{L^q} + \|u\|_{L^{2q}} \|\nabla b_1\|_{L^{2q}}\\
&\le& C\, \|\om\|_{L^q} + C\,(\|u\|_{L^2} + \|\omega\|_{L^2})(\|j\|_{L^2} + \|\nabla j\|_{L^2}).
\eeno
Taking $L^1$ in time on $[0, t]$, applying Young's inequality for convolution
and invoking (\ref{tb}), we have
$$
\|\pp_2 \pp_2 b_1
\|_{L^1_t L^q}
\le \|\pp_2\pp_2(g(t)\ast_2 b_{01})\|_{L^1_t L^q}
+ C\, \|\om\|_{L^1_t L^q} + C,
$$
where $C=C(t, u_0, b_0)$ is bound for the norms of $(u, b)$
obtained previously.  Clearly, by Young's inequality for convolution,
$$
\|\pp_2\pp_2(g(t)\ast_2 b_{01})\|_{L^1_t L^q}
\le \|\pp_2\pp_2\,g_2\|_{L^1_tL^1_{x_2}}\, \|b_{01}\|_{L^q}
\le C t^{1-\frac1\beta} \, \|b_{01}\|_{L^q}.
$$
Therefore,
$$
\|\pp_2 \pp_2 b_1
\|_{L^1_t L^q}  \le C t^{1-\frac1\beta} \, \|b_{01}\|_{L^q}
+ C\, t^{1-\frac1\beta} (\|\om\|_{L^1_t L^q} + 1).
$$
Similarly,
$$
\|\pp_1 \pp_1 b_2
\|_{L^1_t L^q}  \le C t^{1-\frac1\beta} \, \|b_{02}\|_{L^q}
+ C\, t^{1-\frac1\beta} (\|\om\|_{L^1_t L^q} + 1).
$$
To estimate $\|\pp_1 \pp_2 b_1
\|_{L^1_t L^q}$, we resort to the special structure of the nonlinear term, namely (\ref{sss}), which allows us to write
$\pp_1 \pp_2 b_1$ as
\ben
\pp_1 \pp_2 b_1(t) &=& \pp_1\pp_2(g(t)\ast_2 b_{01})
+ \int_0^t \pp_2 g(t-\tau)\ast_2 \pp_1 (b\cdot\nabla u_1 -u\cdot\nabla b_1) \,d\tau  \notag\\
&=& \pp_1\pp_2(g(t)\ast_2 b_{01})
+ \int_0^t  \pp_2 g(t-\tau)\ast_2 \pp_1\pp_2(b_2 u_1- u_2 b_1)\,d\tau   \notag\\
&=& \pp_1\pp_2(g(t)\ast_2 b_{01})
+ \int_0^t  \pp_2\pp_2 g(t-\tau)\ast_2 \pp_1(b_2 u_1- u_2 b_1)\,d\tau. \label{goo}
\een
Then, as in the estimate of $\pp_2\pp_2 b_1$, we have
\beno
\|\pp_1 \pp_2 b_1\|_{L^1_t L^q}  &\le& \|\pp_2 g(t)\|_{L^1_{x,t}}
\|\pp_1 b_{01}\|_{L^q} + C \, t^{1-\frac1\beta} (\|\om\|_{L^1_t L^q} + 1)\\
&\le& C t^{1-\frac1{2\beta}} \|\pp_1 b_{01}\|_{L^q} + C \, t^{1-\frac1\beta} (\|\om\|_{L^1_t L^q} + 1).
\eeno
Similarly,
$$
\|\pp_1 \pp_1 b_2\|_{L^1_t L^q} \le C t^{1-\frac1{2\beta}} \|\pp_2 b_{02}\|_{L^q} + C \, t^{1-\frac1\beta} (\|\om\|_{L^1_t L^q} + 1).
$$
Therefore, by $\nabla\cdot b=0$,
\ben
\|\nabla j\|_{L^1_tL^q}
&=& \|(\Delta b_2, -\Delta b_1)\|_{L^1_tL^q}  \notag\\
&\le& \|\pp_2\pp_2 b_1\|_{L^1_tL^q}
+ \|\pp_1\pp_2 b_2\|_{L^1_tL^q} + \|\pp_1 \pp_1 b_2
\|_{L^1_t L^q} + \|\pp_1 \pp_2 b_1\|_{L^1_t L^q}   \notag\\
&\le& C t^{1-\frac1{2\beta}} \|\nabla b_{0}\|_{L^q} + C \, t^{1-\frac1\beta} (\|\om\|_{L^1_t L^q} + 1). \label{jb}
\een
Then (\ref{ooo}) and (\ref{jb}), together with Gronwall's
inequality implies (\ref{good2}).
\end{proof}

\vskip .1in
\subsection{Global bounds for $\|\nabla j\|_{L^1_tL^\infty}$ and $\|\omega\|_{L^\infty_{x,t}}$ and proof of Theorem \ref{main}} This subsection proves that $\omega$ admits a
global bound in $L^\infty_{x,t}$. This
crucial global bound then ensures a global bound for $\|(u, b)\|_{H^s}$ for any $s>0$.

\vskip .1in
\begin{prop} \label{global3}
Assume $(u_0, b_0)$ satisfies the conditions stated in
Theorem \ref{main}. Let $(u, b)$ be the corresponding solution
of (\ref{MHD}) with $\beta>1$. Then, $(u, b)$
admits the following global bounds, for any $0<t<\infty$,
\begin{equation}\label{good3}
\|\nabla j\|_{L^1_t L^\infty_x} \le C(t, u_0, b_0), \qquad
\|\om\|_{L^\infty_{x,t}} \le C(t, u_0, b_0)
\end{equation}
and
\begin{equation}\label{gggg}
\|(u, b)\|_{H^s}\le C(t, u_0, b_0).
\end{equation}
\end{prop}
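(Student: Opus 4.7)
The plan is to establish the bounds in (\ref{good3}) and (\ref{gggg}) in that order, with each reducing to the previous via standard tools. The $H^s$ bound (\ref{gggg}) follows by applying $\Lambda^s$ to (\ref{MHD}), running an $L^2$ energy estimate with Kato--Ponce commutator bounds, and closing via the Kozono--Taniuchi inequality $\|\nabla u\|_{L^\infty}\lesssim (1+\|\omega\|_{L^\infty})\log(e+\|(u,b)\|_{H^s})$ together with $\|\nabla b\|_{L^\infty}$ obtained from Sobolev embedding and the bounds of Proposition~\ref{good}. The $L^\infty_{x,t}$ bound for $\omega$ follows by integrating the vorticity equation (\ref{omj})$_1$ along the particle trajectories of $u$: $\|\omega(t)\|_{L^\infty}\le \|\omega_0\|_{L^\infty}+\|b\|_{L^\infty_{x,t}}\int_0^t\|\nabla j\|_{L^\infty}\,d\tau$. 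Thus the crux of the proposition is the first bound in (\ref{good3}).

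To bound $\|\nabla j\|_{L^1_t L^\infty}$ I would combine the Sobolev embedding $\|\nabla j\|_{L^\infty}\le C(\|\nabla j\|_{L^q}+\|\Lambda^\sigma \Delta b\|_{L^q})$, valid for any $\sigma q>2$, with a fractional analogue of the $L^1_t L^q$ bound in Proposition~\ref{good}. Since $\beta>1$, I fix $\sigma\in(0,2\beta-2)$ and $q$ large with $\sigma q>2$; it then suffices to prove
\[
\|\Lambda^\sigma \Delta b\|_{L^1_t L^q}\le C(t,u_0,b_0).
\]
Using $\nabla\cdot b=0$, I decompose $\Delta b_1=\pp_2^2 b_1-\pp_1\pp_2 b_2$ (and symmetrically for $\Delta b_2$), reducing matters to $\Lambda^\sigma$ acting on the four terms $\pp_2^2 b_1$, $\pp_1\pp_2 b_1$, $\pp_1^2 b_2$, $\pp_1\pp_2 b_2$. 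For each I would use the one-dimensional integral representations (\ref{b1in})--(\ref{b2in}) in the diffusive direction, together with the cancellation identity (\ref{vv0}) and its rewriting (\ref{goo}) for the mixed derivatives; then handle $\Lambda^\sigma$ in two steps exactly as advertised in the introduction. First, bound $\|\Lambda^\sigma f\|_{L^q}\lesssim\|\Lambda_1^\sigma f\|_{L^q}+\|\Lambda_2^\sigma f\|_{L^q}$ via the H\"ormander--Mikhlin multiplier theorem. Second, for the $\Lambda_2^\sigma$ piece merge it with the $x_2$-kernel: Lemma~\ref{jjj}(c) gives $\|\Lambda_2^\sigma \pp_2^2 g(\tau)\|_{L^1_{x_2}}\le C\tau^{-(\sigma+2)/(2\beta)}$, integrable on $[0,t]$ precisely because $\sigma<2\beta-2$; for the $\Lambda_1^\sigma$ piece commute past $g\ast_2$ and place it on the nonlinear source, controlled by Kato--Ponce product estimates and the bounds of Proposition~\ref{good}. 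A final Gronwall step in $t$, analogous to the end of the proof of (\ref{good2}), then closes the estimate.

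The principal obstacle is the H\"ormander--Mikhlin reduction. The symbol $|\xi|^\sigma/(|\xi_1|^\sigma+|\xi_2|^\sigma)$ is bounded but fails to be $C^2$ on the coordinate axes for $\sigma\in(0,2)$, so the standard multiplier hypothesis is not met globally. One must either employ a frequency-localized decomposition to isolate a neighbourhood of each axis (where on-axis frequencies are annihilated by one of $\Lambda_1,\Lambda_2$ and require separate treatment), or choose $\sigma$ close enough to $2\beta-2$ that enough symbol smoothness is available. Simultaneously satisfying the three constraints $\sigma<2\beta-2$ (kernel integrability), $\sigma q>2$ (embedding into $L^\infty$), and the H\"ormander--Mikhlin regularity requirement, while still controlling the product estimates on the nonlinear source, is the technical heart of the proposition, and is made possible by the assumption $\beta>1$.
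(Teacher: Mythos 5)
Your overall skeleton is the same as the paper's: reduce $\|\nabla j\|_{L^1_tL^\infty}$ to $\|\Lambda^\sigma\nabla j\|_{L^1_tL^q}$ with $0<\sigma<2\beta-2$ and $\sigma q>2$, control the four second-order derivatives of $b$ through the integral representations (\ref{b1in})--(\ref{b2in}), use the identity (\ref{goo}) for the mixed derivatives, and finish with the vorticity equation and a BKM-type argument for (\ref{gggg}). However, there is a genuine gap at the decisive step, namely the ``cross'' terms $\Lambda_1^\sigma\partial_2\partial_2 b_1$ and $\Lambda_2^\sigma\partial_1\partial_1 b_2$, i.e.\ the fractional derivative taken in the direction in which the relevant component has \emph{no} diffusion. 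You propose to commute $\Lambda_1^\sigma$ past $g(t-\tau)\ast_2$ and place it on the nonlinear source, controlling the result ``by Kato--Ponce product estimates and the bounds of Proposition~\ref{good}.'' This does not close: $\Lambda_1^\sigma(b\cdot\nabla u_1-u\cdot\nabla b_1)$, or after using (\ref{goo}) the quantity $\Lambda_1^\sigma\partial_1(b_2u_1-u_2b_1)$, requires via any product estimate a bound on $\Lambda^{\sigma}\nabla u$ (equivalently $\Lambda^{1+\sigma}u$) in some $L^p$, whereas Proposition~\ref{good} only yields $\omega\in L^\infty_tL^q$, i.e.\ $\nabla u\in L^q$ with no extra fractional regularity. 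The paper explicitly identifies this as the reason the direct route to (\ref{good33}) fails. The missing idea is algebraic: since $\nabla\cdot b=0$ gives $\xi_1\widehat{b_1}=-\xi_2\widehat{b_2}$, one has on the Fourier side
\begin{equation*}
|\xi_1|^\sigma\xi_2^2\,\widehat{b_1}
=\frac{|\xi_1|^\sigma\xi_2^2}{|\xi_1|^{2+\sigma}+|\xi_2|^{2+\sigma}}
\Bigl(|\xi_2|^{2+\sigma}\widehat{b_1}+|\xi_1|^{\sigma}\xi_1\xi_2\,\widehat{b_2}\Bigr),
\end{equation*}
so that $\Lambda_1^\sigma\partial_2\partial_2 b_1$ is a bounded Fourier multiplier $T_m$ (Lemma~\ref{mul}) applied to a combination of $\Lambda_2^\sigma\partial_2\partial_2 b_1$ and $\Lambda_1^\sigma\partial_1\partial_2 b_2$ --- each of which carries the fractional derivative in its \emph{own} diffusive direction and is already controlled by the kernel estimates. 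This transfer of the fractional derivative from the non-diffusive to the diffusive direction, using the divergence-free condition to switch components, is the heart of the proposition and is absent from your argument.

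A secondary remark: you flag the non-smoothness of $|\xi|^\sigma/(|\xi_1|^\sigma+|\xi_2|^\sigma)$ on the axes as the principal obstacle. That concern is legitimate (the paper's own multiplier $m(\xi)=|\xi_1|^\sigma|\xi_2|^2/\bigl((\xi_1^2)^{1+\sigma/2}+(\xi_2^2)^{1+\sigma/2}\bigr)$ has the same feature, and the paper simply asserts that Lemma~\ref{mul} applies; a careful treatment would use a Marcinkiewicz-type condition on mixed derivatives, which this $m$ does satisfy). But it is not the main difficulty: even granting full multiplier boundedness, your plan for the cross terms still requires the uncontrolled quantity $\Lambda^\sigma\nabla u$, so the estimate cannot be closed as written.
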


To prove Proposition \ref{global3}, we need the following H\"{o}rmander-Mikhlin multiplier theorem (see, e.g. \cite[p.96]{Stein}).
\begin{lemma} \label{mul}
Let $m$ be a bounded function on $\mathbb{R}^d$ which is smooth except possibly at the origin, and such that
$$
|\nabla^{k} m(\xi)| \le C\,|\xi|^{-k}, \qquad 0\le k\le \frac{d}{2}+1.
$$
Then m is an $L^p$ multiplier for all $1 < p < \infty$, or the operator $T_m$ defined by $$\widehat{T_m f} = m \widehat{f}, \qquad f\in L^2\cap L^p,
$$
is bounded from $L^2\cap L^p$ to $L^2\cap L^p$.
\end{lemma}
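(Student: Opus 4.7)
The plan is to follow the classical proof by reducing Lemma \ref{mul} to the Calder\'on--Zygmund theory: verify that the operator $T_m$ is bounded on $L^2$, show that the associated convolution kernel satisfies the H\"ormander integral condition, conclude weak-$(1,1)$ boundedness, and then obtain the full range $1<p<\infty$ via Marcinkiewicz interpolation and duality.

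First I would observe that $L^2$-boundedness is immediate: since $m\in L^\infty(\mathbb{R}^d)$, Plancherel's theorem gives $\|T_m f\|_{L^2} = \|m\widehat{f}\|_{L^2}\le \|m\|_{L^\infty}\|f\|_{L^2}$. Next, to identify a convolution kernel, I would decompose $m$ using a smooth Littlewood--Paley partition of unity: pick $\varphi\in C_c^\infty(\mathbb{R}^d)$ supported in the annulus $\{1/2\le |\xi|\le 2\}$ with $\sum_{j\in\mathbb{Z}}\varphi(2^{-j}\xi)=1$ for $\xi\neq 0$, and write $m_j(\xi)=m(\xi)\varphi(2^{-j}\xi)$ and $K_j=\mathcal{F}^{-1}m_j$. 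The differential hypothesis transfers to the bound $|\nabla^k m_j(\xi)|\le C\,2^{-jk}$ on the support annulus, of volume $\sim 2^{jd}$.

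The heart of the argument is the kernel estimate. From the inverse Fourier transform $K_j(x)=\int m_j(\xi)e^{ix\cdot\xi}\,d\xi$ one obtains, by a direct size estimate on the support annulus, $\|K_j\|_{L^\infty}\le C\,2^{jd}$, and, after integrating by parts $\lfloor d/2\rfloor+1$ times and using the hypothesis at $k=\lfloor d/2\rfloor+1$, the decay $|K_j(x)|\le C\,2^{jd}(2^j|x|)^{-(\lfloor d/2\rfloor+1)}$ (with an analogous bound for $|\nabla K_j(x)|$ involving an extra factor $2^j$). These two bounds combine, via Cauchy--Schwarz on the pair of exponents $d/2$ and $d/2+1$, to give the crucial global estimates
\begin{equation*}
\int_{|x|\ge R}|K_j(x)|\,dx \le C\min\bigl(1,(2^j R)^{-1/2}\bigr),\qquad \int_{\mathbb{R}^d}|x|\,|\nabla K_j(x)|\,dx \le C\,2^{-j}\cdot 2^{j}=C,
\end{equation*}
with constants independent of $j$. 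Summing in $j$ then yields the H\"ormander integral condition for the kernel $K=\sum_j K_j$:
\begin{equation*}
\sup_{y\neq 0}\int_{|x|\ge 2|y|}|K(x-y)-K(x)|\,dx \le C.
\end{equation*}

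With $L^2$-boundedness and the H\"ormander condition in hand, the Calder\'on--Zygmund decomposition (split $f$ at level $\alpha$ into a good part bounded by $C\alpha$ and a bad part supported on disjoint cubes with mean zero) gives weak-type $(1,1)$: $|\{|T_m f|>\alpha\}|\le C\alpha^{-1}\|f\|_{L^1}$. Marcinkiewicz interpolation between weak $(1,1)$ and $(2,2)$ then yields strong $(p,p)$ for $1<p\le 2$. Finally, since $T_m^\ast=T_{\overline m}$ and $\overline m$ satisfies the same hypotheses, duality produces the range $2\le p<\infty$, completing the proof. I expect the main obstacle to be the careful kernel estimate, in particular getting the $j$-uniform integral bounds on $K_j$ that enable the H\"ormander condition to survive the summation over all scales; this is where the precise cut-off $k\le d/2+1$ in the hypothesis is essential, since a half derivative less of smoothness just suffices to beat the volume $2^{jd}$ of the dyadic shell.
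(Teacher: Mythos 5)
The paper offers no proof of this lemma at all --- it is quoted directly from Stein's book \cite[p.96]{Stein} --- and your outline is precisely the classical Calder\'{o}n--Zygmund/Littlewood--Paley proof given in that source ($L^2$ bound by Plancherel, dyadic decomposition of $m$, kernel estimates, H\"{o}rmander integral condition, weak $(1,1)$, interpolation and duality), so in effect you are reproducing the cited argument rather than diverging from anything in the paper. The outline is correct; the one step to state carefully is the tail bound $\int_{|x|\ge R}|K_j|\,dx\le C(2^jR)^{-1/2}$, which cannot come from the pointwise decay of order $\lfloor d/2\rfloor+1$ alone (that power is not integrable at infinity for $d\ge 2$) but rather from the weighted-$L^2$ estimate $\||x|^{N}K_j\|_{L^2}\lesssim\|\nabla^{N}m_j\|_{L^2}$ via Plancherel followed by Cauchy--Schwarz, which is evidently what your Cauchy--Schwarz step is meant to encode.
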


\begin{proof}[Proof of Proposition \ref{global3}]
Due to the embedding inequality, for any $q>\frac2\sigma$,
\beno
\|\nabla j\|_{L^\infty} \le C\,(\|\nabla j\|_{L^2} + \|\Lambda^\sigma \nabla j\|_{L^q}),
\eeno
it suffices to show that, for some $\sigma>0$ and for all
$2\le q<\infty$,
\ben \label{good30}
\|\Lambda^\sigma \nabla j\|_{L^1_t L^q} <\infty.
\een
We first show, for $0<\sigma < 2\beta -2$,
\ben \label{good31}
\|\Lambda_2^\sigma \pp_2\pp_2 b_1\|_{L^1_t L^q} \le C(t, u_0, b_0) <\infty, \quad
\|\Lambda_1^\sigma \pp_1\pp_1 b_2\|_{L^1_t L^q} \le C(t, u_0, b_0)<\infty.
\een
Applying $\Lambda_2^\sigma \pp_2\pp_2$ to the integral representation of $b_1$ in
(\ref{b1in}), taking the norm in $L^1_t L^q$ and using Young's inequality for convolution, we obtain
\beno
\|\Lambda_2^\sigma \pp_2\pp_2 b_1\|_{L^1_t L^q}
&\le& \|\Lambda_2^\sigma \pp_2\pp_2 g\|_{L^1_t L^1_x} \|b_{01}\|_{L^q}\\
&&   + \,\|\Lambda_2^\sigma \pp_2\pp_2 g\|_{L^1_t L^1_x}\,
\|b\cdot\nabla u_1 -u\cdot\nabla b_1\|_{L^1_t L^q_x}.
\eeno
According to Lemma \ref{jjj},
$$
\|\Lambda_2^\sigma \pp_2\pp_2 g\|_{L^1_x} \le C\, t^{-\frac{2+\sigma}{2\beta}}.
$$
Therefore, for $0<\sigma < 2\beta -2$,
$$
\|\Lambda_2^\sigma \pp_2\pp_2 g\|_{L^1_t L^1_x} = C\, t^{\frac{2\beta-(2+\sigma)}{2\beta}}.
$$
By H\"{o}lder's inequality and Sobolev's inequality,
$$
\|b\cdot\nabla u_1 -u\cdot\nabla b_1\|_{L^1_t L^q_x} \le C(t, u_0, b_0).
$$
Therefore,
$$
\|\Lambda_2^\sigma \pp_2\pp_2 b_1\|_{L^1_t L^q} \le C(t, u_0, b_0).
$$
Similarly,
$$
\|\Lambda_1^\sigma \pp_1\pp_1 b_2\|_{L^1_t L^q} \le C(t, u_0, b_0).
$$
Therefore, (\ref{good31}) holds.
Making use of the structure of the nonlinearity, we can also show that
\ben \label{good32}
\|\Lambda_2^\sigma \pp_1\pp_2 b_1\|_{L^1_t L^q} \le C(t, u_0, b_0), \qquad
\|\Lambda_1^\sigma \pp_1\pp_2 b_2\|_{L^1_t L^q} \le C(t, u_0, b_0).
\een
In fact, applying $\Lambda_2^\sigma \pp_1\pp_2$ to (\ref{b1in}), writing
$\pp_1\pp_2 b_1$ as in (\ref{goo}) and taking the norm in $L^1_t L^q$, we obtain
\beno
\|\Lambda_2^\sigma \pp_1\pp_2 b_1\|_{L^1_t L^q} &\le& \|\Lambda^\sigma_2\pp_2 g\|_{L^1_t L^1_x} \|\pp_1b_{01}\|_{L^q}\\
&& +\, \|\Lambda^\sigma_2\pp_2\pp_2 g\|_{L^1_t L^1_x} \|\pp_1(b_2 u_1
-u_2 b_1)\|_{L^1_t L^q_x} \le C <\infty.
\eeno
Therefore, (\ref{good32}) holds. Next we show that
\ben \label{good33}
\|\Lambda_1^\sigma \pp_2\pp_2 b_1\|_{L^1_t L^q} <\infty, \qquad
\|\Lambda_2^\sigma \pp_1\pp_1 b_2\|_{L^1_t L^q} <\infty.
\een
It appears that we can not prove (\ref{good33}) in the same way as
(\ref{good31}) and (\ref{good32}). The main reason is that, when we apply
the operator $\Lambda_1^\sigma \pp_2\pp_2$ to the integral representation
of $b_1$ in (\ref{b1in}), the part $\Lambda_1^\sigma$ has to be applied to $b\cdot\nabla u_1 -u\cdot\nabla b_1$, but unfortunately we have no control on
$\Lambda_1^\sigma\nabla u_1$. Instead we prove (\ref{good33}) using the
the H\"{o}rmander-Mikhlin multiplier theorem stated in Lemma \ref{mul}. More
precisely, due to the simple inequality
$$
|\xi_1|^\sigma |\xi_2|^2 \le \frac{2}{2+\sigma} (\xi_2^2)^{1+\frac\sigma2} +
\frac{\sigma}{2+\sigma} (\xi_1^2)^{1+\frac\sigma2},
$$
Plancherel's theorem and the global bounds in (\ref{good31}) and (\ref{good32}) imply that
$$
\|\Lambda_1^\sigma \pp_2\pp_2 b_1\|_{L^1_t L^2}
\le C\,\left(\|\Lambda_2^\sigma \pp_2\pp_2 b_1\|_{L^1_t L^2} + \|\Lambda_1^\sigma \pp_1\pp_2 b_2\|_{L^1_t L^2}\right) \le C(t, u_0, b_0).
$$
Define the Fourier multiplier operator $T_m$ by
$$
\widehat{T_m f}(\xi) = m(\xi) \, \widehat{f}(\xi), \qquad m(\xi) = \frac{|\xi_1|^\sigma |\xi_2|^2}{(\xi_2^2)^{1+\frac\sigma2} + (\xi_1^2)^{1+\frac\sigma2}}.
$$
It is easy to check that $m$ obeys the conditions of Lemma \ref{mul}. It then
follows from Lemma \ref{mul} that
\beno
\|\Lambda_1^\sigma \pp_2\pp_2 b_1\|_{L^q} &=& \|T_m \Lambda_2^\sigma \pp_2\pp_2 b_1
+ T_m \Lambda_1^\sigma \pp_1\pp_2 b_2\|_{L^q}\\
 &\le& C\, \left(\|\Lambda_2^\sigma \pp_2\pp_2 b_1\|_{L^q} + \|\Lambda_1^\sigma \pp_1\pp_2 b_2\|_{L^q}\right).
\eeno
Therefore, the global bounds in (\ref{good31}) and (\ref{good32}) implies
$$
\|\Lambda_1^\sigma \pp_2\pp_2 b_1\|_{L^1_t L^q}
\le C\,\left(\|\Lambda_2^\sigma \pp_2\pp_2 b_1\|_{L^1_t L^q} + \|\Lambda_1^\sigma \pp_1\pp_2 b_2\|_{L^1_t L^q}\right) \le C(t, u_0, b_0).
$$
Similarly,
$$
\|\Lambda_2^\sigma \pp_1\pp_1 b_2\|_{L^1_t L^q} \le C(t, u_0, b_0).
$$
This proves (\ref{good33}). We can also prove in a similar fashion that
\ben \label{good34}
\|\Lambda_1^\sigma \pp_1\pp_2 b_1\|_{L^1_t L^q} <\infty, \qquad
\|\Lambda_2^\sigma \pp_1\pp_2 b_2\|_{L^1_t L^q} <\infty.
\een
It is clear that (\ref{good31}), (\ref{good32}), (\ref{good33})
and (\ref{good34}) imply (\ref{good30}). We thus have obtained
$$
\|\nabla j\|_{L^1_t L^\infty} \le C (t, u_0, b_0).
$$
Furthermore, the vorticity equation implies
$$
\|\omega(t)\|_{L^\infty} \le \|\omega_0\|_{L^\infty} + \int_0^t \|b(\tau)\|_{L^\infty}\|\nabla j(\tau)\|_{L^\infty}\,d\tau \le C(t, u_0, b_0).
$$
This completes the proof of (\ref{good3}). Once we have the global bounds
$$
\|\omega\|_{L^1_t L^\infty_x} \le C (t, u_0, b_0), \quad \|j\|_{L^1_t L^\infty_x} \le C (t, u_0, b_0),
$$
the global bound in (\ref{gggg}) then follows from a standard procedure
(see, e.g., \cite{BKM}).
This completes the proof of Proposition \ref{global3}.
\end{proof}

\vskip .1in
We finally provide the proof of Theorem \ref{main}.

\begin{proof}[Proof of  Theorem \ref{main}] Once the global {\it a priori} bounds is at our disposal, the proof can be achieved via a standard procedure. First we seek the solution
of a regularized system. We begin by introducing a few notation. For $\varepsilon>0$, we denote by $\phi_\varepsilon$ the standard mollifier, namely
$$
\phi_\varepsilon(x)=\varepsilon^{-2}\phi(\varepsilon^{-1}|x|)
$$
with
$$
\phi\in C_0^\infty(\mathbb{R}^2), \quad \phi(x) = \phi(|x|), \quad \mbox{supp}\phi\subset \{x| |x|<1\}, \quad \int_{\mathbb{R}^2} \phi(x)\, dx =1.
$$
For any locally integrable function $v$, define the mollification $\mathcal{J}_\varepsilon v$ by
$$
\mathcal{J}_\varepsilon v = \phi_\varepsilon \ast v.
$$
Let $\mathbb{P}$ denote the Leray projection operator (onto divergence-free vector fields). We seek a solution $(u^\varepsilon, b^\varepsilon)$ of the system
$$
\left\{\begin{array}{l}
\partial_t u^\varepsilon  + \mathbb{P} \mathcal{J}_\varepsilon((\mathcal{J}_\varepsilon u^\varepsilon) \cdot\nabla (\mathcal{J}_\varepsilon u^\varepsilon))
          =    \mathbb{P}\mathcal{J}_\varepsilon((\mathcal{J}_\varepsilon b^\varepsilon) \cdot\nabla (\mathcal{J}_\varepsilon b^\varepsilon))  ,\vspace{2mm}\\
\partial_t b_1^\varepsilon  + \mathcal{J}_\varepsilon((\mathcal{J}_\varepsilon u^\varepsilon) \cdot\nabla (\mathcal{J}_\varepsilon b_1^\varepsilon))
            + \eta \mathcal{J}^2_\varepsilon\Lambda_2^{2\beta} b^\varepsilon _1
            =   \mathcal{J}_\varepsilon((\mathcal{J}_\varepsilon b^\varepsilon) \cdot\nabla (\mathcal{J}_\varepsilon u_1^\varepsilon)) ,\vspace{2mm}\\
\partial_t b^\varepsilon _2 + \mathcal{J}_\varepsilon((\mathcal{J}_\varepsilon u^\varepsilon) \cdot\nabla (\mathcal{J}_\varepsilon b^\varepsilon _2))
           + \eta \mathcal{J}^2_\varepsilon \Lambda_{1}^{2\beta} b_2^\varepsilon     =   \mathcal{J}_\varepsilon((\mathcal{J}_\varepsilon b^\varepsilon) \cdot\nabla (\mathcal{J}_\varepsilon u_2^\varepsilon)),\vspace{2mm}\\
\nabla \cdot u^\varepsilon  =\nabla \cdot b^\varepsilon  = 0,\\
(u^\varepsilon,w^\varepsilon)(x,0)=(u_0*\phi_\varepsilon,\ \
b_0*\phi_\varepsilon)=(u_0^\varepsilon,b_0^\varepsilon).
\end{array}\right.
$$
Following the lines as those in the proofs of Propositions
\ref{h1bound}, \ref{good} and   \ref{global3}, we can establish the
global bound, for any $t\in (0, \infty)$,
$$
\|u^\varepsilon(t)\|_{H^s}^2+\|b^\varepsilon(t)\|_{H^s}^2 \leq C(t, u_0, b_0).
$$
A standard compactness argument allows us to obtain the global
existence of the classical solution $(u,b)$ to (\ref{MHD}). The
uniqueness can also be easily established. We omit further details. This completes  the proof of Theorem \ref{main}.
\end{proof}

\vskip .4in \noindent \textbf{Acknowledgments}\,\, B. Dong was
supported by the NNSFC grants No.11271019 and 11571240. J. Li was
partially supported by the NNSFC grant No.11201181. J. Wu was partially
supported by the NSF grant DMS 1614246, by the AT\&T
Foundation at Oklahoma State University, and by NNSFC grant No.11471103
(a grant awarded to B. Yuan).

\vskip .3in


\begin{thebibliography}{999}

\bibitem{Sc07} R. Agapito and M. Schonbek, Non-uniform decay of MHD equations with and without magnetic diffusion, {\it Comm. Partial Differential Equations \bf32} (2007), 1791--1812.

\bibitem{Am}H. Amann,  Maximal regularity for nonautonoumous evolution
equations, {\it Adv. Nonlinear Stud. \bf 4}  (2004),  417--430.

\bibitem{BKM}J. Beale,  T. Kato and  A. Majda,
Remarks on the breakdown of smooth solutions for the 3-D Euler
equations, {\it Commun. Math. Phys.  \bf 94} (1984),  61--66.


\bibitem{Bis} D. Biskamp, {\it Nonlinear Magnetohydrodynamics},
Cambridge University Press, Cambridge, 1993.

\bibitem{CaoReWu} C. Cao, D. Regmi and J. Wu, The 2D MHD equations with horizontal dissipation and horizontal
magnetic diffusion, {\it J. Differential Equations \bf 254} (2013), 2661--2681.

\bibitem{CaoReWuZ} C. Cao, D. Regmi, J. Wu and X. Zheng, Global regularity for the 2D magnetohydrodynamics equations with horizontal dissipation and horizontal magnetic diffusion, preprint.

\bibitem{CaoWu} C. Cao and J. Wu, Global regularity for the 2D MHD equations with mixed partial dissipation
and magnetic diffusion, {\it Adv.  Math. \bf 226} (2011), 1803--1822.

\bibitem{CaoWuYuan} C. Cao, J. Wu and B. Yuan, The 2D incompressible
magnetohydrodynamics equations with only magnetic diffusion,
{\it SIAM J. Math. Anal. \bf 46} (2014),  588--602.

\bibitem{CaiLei} Y. Cai and Z. Lei, Global well-posedness of the incompressible magnetohydrodynamics, arXiv: 1605.00439 [math.AP] 2 May 2016.

\bibitem{Chem} J.-Y. Chemin, D.S. McCormick, J.C. Robinson and J.L. Rodrigo, Local existence for the non-resistive MHD equations in Besov spaces, {\it Adv. Math. \bf 286} (2016), 1--31.

\bibitem{ChenWang} G.-Q. Chen and D. Wang, Global solutions of nonlinear magnetohydrodynamics
with large initial data, {\it J. Differential Equations \bf 182} (2002), 344-376.


\bibitem{Con22} P. Constantin, Lagrangian-Eulerian methods for uniqueness in hydrodynamic systems,  {\it Adv. Math. \bf 278} (2015), 67--102.

\bibitem{Davi} P.A. Davidson, {\it An Introduction to Magnetohydrodynamics},  Cambridge University Press, Cambridge, England, 2001.

 \bibitem{DZ}
L. Du and D. Zhou, Global well-posedness of two-dimensional
magnetohydrodynamic flows with partial dissipation and magnetic
diffusion, {\it SIAM J. Math. Anal.  \bf 47}  (2015), 1562-1589.


\bibitem{FNZ} J. Fan, H. Malaikah, S. Monaquel, G. Nakamura and Y. Zhou,
Global cauchy problem of 2D generalized MHD equations, {\it Monatsh.  Math. \bf 175} (2014), 127-131.

\bibitem{Fefferman1}C.L. Fefferman, D.S. McCormick, J.C. Robinson and J.L. Rodrigo, Higher order commutator estimates and local existence for the non-resistive MHD equations and related models, 	 {\it J. Funct. Anal. \bf 267} (2014), 1035--1056.

\bibitem{Fefferman2} C.L. Fefferman, D.S. McCormick, J.C. Robinson and J.L. Rodrigo, Local existence for the non-resistive MHD equations in nearly optimal Sobolev spaces,  {\it Arch. Ration. Mech. Anal. \bf 223} (2017),  677--691.


\bibitem{HeXuYu} L. He, L. Xu and P. Yu, On global dynamics of three dimensional magnetohydrodynamics: nonlinear stability of Alfv\'{e}n waves, 	arXiv:1603.08205 [math.AP] 27 Mar 2016.

\bibitem{Hitt} S. Hittmeir and S. Merino-Aceituno, Kinetic derivation of fractional Stokes and Stokes-Fourier systems, arXiv: 1408.6400v2 [math-ph] 30 Sep 2014.

\bibitem{HuX} X. Hu, Global existence for two dimensional compressible
magnetohydrodynamic flows with zero magnetic diffusivity, arXiv: 1405.0274v1 [math.AP] 1 May 2014.

\bibitem{HuLin} X. Hu and F. Lin, Global Existence for Two Dimensional Incompressible Magnetohydrodynamic Flows with Zero Magnetic Diffusivity, arXiv: 1405.0082v1 [math.AP] 1 May 2014.

\bibitem{HuWang} X. Hu and D. Wang, Global existence and large-time behavior of solutions to the three-dimensional equations of compressible magnetohydrodynamic flows, {\it Arch. Ration. Mech. Anal. \bf 197} (2010), 203-238.

\bibitem{JiuNiu} Q. Jiu and D. Niu, Mathematical results related to a two-dimensional magneto-hydrodynamic equations,
{\it Acta Math. Sci. Ser. B Engl. Ed. \bf 26} (2006), 744-756.

\bibitem{JNW} Q. Jiu, D. Niu, J. Wu, X. Xu and H. Yu, The 2D magnetohydrodynamic equations with magnetic diffusion, {\it Nonlinearity \bf 28} (2015),  3935-3955.

\bibitem{JiuZhao} Q. Jiu and J. Zhao, A remark on global regularity of 2D generalized magnetohydrodynamic equations, {\it J. Math. Anal. Appl. \bf 412} (2014), 478-484.

\bibitem{JiuZhao2} Q. Jiu and J. Zhao, Global regularity of 2D generalized MHD equations with magnetic diffusion, {\it Z. Angew. Math. Phys. \bf 66} (2015), 677-687.

 \bibitem{KP}T.  Kato and G. Ponce,
  Commutator estimates and the Euler and the Navier-Stokes
 equations,  {\it Comm. Pure Appl. Math. \bf 41} (1988),  891--907.

\bibitem{LeiZ} Z. Lei and Y.  Zhou, BKM's criterion and global weak solutions for magnetohydrodynamics with zero viscosity, {\it Discrete Contin. Dyn. Syst. \bf 25} (2009), 575-583.


\bibitem{Lem} P.G. Lemarie-Rieusset,
 {\it Recent Developments in the Navier-Stokes Problem}, Chapman \
Hall/CRC Research Notes in Mathematics Series, CRC Press, 2002.


\bibitem{LZ}
F. Lin, L. Xu and P. Zhang, Global small solutions to an MHD-type
system: the three-dimensional case, {\it Comm. Pure Appl. Math.  \bf 67}
(2014),  531--580.

\bibitem{LinZhang1} F. Lin, L. Xu, and  P. Zhang, Global small solutions to 2-D incompressible MHD system, {\it J. Differential Equations} {\bf 259} (2015), 5440--5485.


\bibitem{Pri} E. Priest and T. Forbes, {\it Magnetic Reconnection, MHD Theory and Applications},
Cambridge University Press, Cambridge, 2000.

\bibitem{Ren} X. Ren, J. Wu, Z. Xiang and Z. Zhang, Global existence and decay of smooth solution for the 2-D MHD equations without magnetic diffusion, {\it J. Functional Analysis \bf 267} (2014),  503--541.





\bibitem{Stein} E. Stein, {\it Singular Integrals and Differentiability Properties of Functions}, Princeton Unviersity Press, Princeton, NJ, 1970.


\bibitem{Stew} K. Stewartson, On asymptotic expansions in the theory of boundary layers, {\it Studies in Applied Mathematics \bf 36} (1957),  173--191.

\bibitem{TrYu} C. Tran, X. Yu and Z. Zhai, On global regularity of 2D generalized magnetohydrodynamic
equations, {\it J. Differential Equations \bf 254} (2013), 4194--4216.


\bibitem{WeiZ} D. Wei and Z. Zhang, Global well-posedness of the MHD equations in a homogeneous magnetic field, 	
    arXiv:1607.04397 [math.AP] 15 Jul 2016.

\bibitem{Wu2} J. Wu,  Generalized MHD equations, {\it J. Differential Equations \bf 195} (2003),  284--312.

\bibitem{Wu3} J. Wu, Regularity criteria for the generalized MHD equations, {\it Comm. Partial Differential Equations \bf 33} (2008), 285--306.

\bibitem{Wu4} J. Wu, Global regularity for a class of generalized magnetohydrodynamic equations, {\it J. Math. Fluid Mech. \bf 13} (2011), 295--305.

\bibitem{WuWu} J. Wu and Y. Wu, Global small solutions to the compressible 2D magnetohydrodynamic system without magnetic diffusion,  {\it Adv. Math. \bf 310} (2017), 759--888.

\bibitem{WuWuXu} J. Wu, Y. Wu and X. Xu, Global small solution to the 2D MHD system with a velocity damping term, {\it SIAM J. Math. Anal. \bf 47} (2015), 2630--2656.

\bibitem{WuZhang} J. Wu and P. Zhang, The global regularity problem
on the 2D magnetohydrodynamic equations with magnetic diffusion only,
work in progress.


\bibitem{Yam1} K. Yamazaki, On the global well-posedness of N-dimensional generalized MHD system in anisotropic spaces, {\it Adv. Differential Equations \bf 19} (2014),  201--224.

\bibitem{Yam2} K. Yamazaki, Remarks on the global regularity of the two-dimensional magnetohydrodynamics system with zero dissipation,
    {\it Nonlinear Anal. \bf 94} (2014), 194--205.

\bibitem{Yam3} K. Yamazaki, On the global regularity of two-dimensional generalized magnetohydrodynamics system, {\it J. Math. Anal. Appl. \bf 416} (2014), 99--111.

\bibitem{Yam4} K. Yamazaki, Global regularity of logarithmically supercritical MHD system with zero diffusivity, {\it Appl. Math. Lett. \bf 29} (2014), 46--51.
    
\bibitem{Yam5} K. Yamazaki, Global regularity of N-dimensional generalized MHD system with anisotropic dissipation and diffusion, {\it Nonlinear Anal. \bf 122} (2015),
176-191.

\bibitem{YuanBai} B. Yuan and L. Bai, Remarks on global regularity of 2D generalized MHD equations, {\it J. Math. Anal. Appl. \bf 413} (2014), 633--640.


\bibitem{ZZ}
C. Zhai and T. Zhang, Global existence and uniqueness theorem to 2-D
incompressible non-resistive MHD system with non-equilibrium
background magnetic field,  {\it J. Differential Equations  \bf 261} (2016),
3519--3550.

\bibitem{TZhang} T. Zhang, An elementary proof of the global existence and uniqueness theorem to 2-D incompressible non-resistive MHD system, arXiv:1404.5681v1 [math.AP] 23 Apr 2014.

\end{thebibliography}
\end{document}